\theoremstyle{plain}
\newtheorem{thm}{Theorem}
\newtheorem{lem}{Lemma}
\newtheorem*{claim}{Claim}
\newtheorem{conj}{Conjecture}
\theoremstyle{definition}
\newtheorem{example}{Example}
\title[A cross-intersection problem with measures]
{A semidefinite programming approach to \\ a cross-intersection problem with measures}
\author{Sho Suda}
\address{Department of Mathematics Education, Aichi University of Education, Kariya 448-8542, Japan}
\email{suda@auecc.aichi-edu.ac.jp}
\author{Hajime Tanaka}
\address{Research Center for Pure and Applied Mathematics, Graduate School of Information Sciences, Tohoku University, Sendai 980-8579, Japan}
\email{htanaka@tohoku.ac.jp}
\author{Norihide Tokushige}
\address{College of Education, Ryukyu University, Nishihara, Okinawa 903-0213, Japan}
\email{hide@edu.u-ryukyu.ac.jp }
\begin{document}

\begin{abstract}
We present a semidefinite programming approach to bound the measures of cross-independent pairs in a bipartite graph. 
This can be viewed as a far-reaching extension of Hoffman's ratio bound on the independence number of a graph.
As an application, we solve a problem on the maximum measures of cross-intersecting families of subsets with two different product measures, which is a generalized measure version of the Erd\H{o}s--Ko--Rado theorem for cross-intersecting families with different uniformities.
\end{abstract}

\maketitle

\section{Introduction}

The Erd\H{o}s--Ko--Rado (EKR) theorem \cite{EKR1961QJMO} has many generalizations and extensions; see, e.g., \cite{DF1983SIAM,FG1989SDX,FT2003SSMH,Tokushige2005RMJ,GM2015B}.
In this paper, we focus on the following measure version due to Fishburn, Frankl, Freed, Lagarias, and Odlyzko~\cite{FFFLO1986SIAM} concerning intersecting families of subsets.
Let $n$ be a positive integer, and let $\Omega:=2^{[n]}$, where $[n]:=\{1,2,\dots, n\}$. 
We call $\bm{p}=(p^{(1)},\dots,p^{(n)})$ a \emph{probability vector} if $0<p^{(\ell)}<1$ for all $\ell\in[n]$.
Let $\mu_{\bm{p}}$ be the \emph{product measure} on $\Omega$ with respect to $\bm{p}$ defined by
\begin{equation*}
	\mu_{\bm{p}}(U):=\sum_{x\in U}\prod_{\ell\in x}p^{(\ell)}\!\prod_{k\in[n]\backslash x}\!(1-p^{(k)}) \qquad (U\subset\Omega).
\end{equation*}
Note that $\mu_{\bm{p}}$ is a probability measure on $\Omega$, i.e., $\mu_{\bm{p}}(\Omega)=1$.
We say that a family of subsets $U\subset \Omega$ is \emph{intersecting} if $x\cap y\neq\emptyset$ for all $x,y\in U$.

\begin{thm}[Fishburn et al.~\cite{FFFLO1986SIAM}]\label{FFFLO}
Let $\mu_{\bm{p}}$ be the product measure defined above, and assume that $p^{(1)}=\max\{p^{(\ell)}:\ell\in[n]\}$, and that $p^{(\ell)}\leqslant 1/2$ for $\ell\geqslant 2$.
If $U\subset\Omega$ is intersecting, then $\mu_{\bm{p}}(U) \leqslant p^{(1)}$.
Moreover, if $p^{(1)}>p^{(\ell)}$ for $\ell\geqslant 2$, then equality holds if and only if $U=\{x\in\Omega:1\in x\}$.
\end{thm}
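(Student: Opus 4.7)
Proposal: The plan is to prove the bound by induction on $n$, after first reducing to a canonical form via shifting. The base case $n=1$ is immediate, since any intersecting $U\subset 2^{[1]}$ is contained in $\{\{1\}\}$. For the inductive step I first replace $U$ by its upward closure $U^{\uparrow}$, which is still intersecting and satisfies $\mu_{\bm{p}}(U^{\uparrow})\geq \mu_{\bm{p}}(U)$. Next I apply the compressions $S_{1\ell}$ for $\ell=2,\dots,n$, each of which replaces $x\in U$ with $1\notin x$, $\ell\in x$, and $(x\setminus\{\ell\})\cup\{1\}\notin U$ by $(x\setminus\{\ell\})\cup\{1\}$. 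Because $p^{(1)}\geq p^{(\ell)}$, such a replacement multiplies $\mu_{\bm{p}}(\{x\})$ by $p^{(1)}(1-p^{(\ell)})/[p^{(\ell)}(1-p^{(1)})]\geq 1$, so it does not decrease $\mu_{\bm{p}}(U)$, while preserving intersectingness and upward closure. Thus we may assume that $U$ is upward closed and $1$-shifted.

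Let $N:=\{2,\dots,n\}$ and decompose $U=U_0\sqcup U_1$ with $U_0=\{x\in U:1\notin x\}$ and $U_1=\{x\in U:1\in x\}$, and set $\tilde{U}_1:=\{x\setminus\{1\}:x\in U_1\}\subset 2^N$. Let $\nu$ denote the product measure on $2^N$ with parameters $(p^{(2)},\dots,p^{(n)})$. Then
\[
\mu_{\bm{p}}(U)\;=\;(1-p^{(1)})\,\nu(U_0)\,+\,p^{(1)}\,\nu(\tilde{U}_1),
\]
with $U_0$ intersecting in $2^N$, $U_0\subseteq\tilde{U}_1$ by upward closure, and $U_0,\tilde{U}_1$ cross-intersecting in $2^N$. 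The bound $\mu_{\bm{p}}(U)\leq p^{(1)}$ is therefore equivalent to
\[
(1-p^{(1)})\,\nu(U_0)\;\leq\;p^{(1)}\,\nu(2^N\setminus\tilde{U}_1).
\]

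The main obstacle is precisely this cross-intersection inequality. A first observation is that, since $U$ is intersecting, for each $y\in U_0$ one has $[n]\setminus y\notin U$, so $N\setminus y\in 2^N\setminus\tilde{U}_1$; hence complementation provides an injection $U_0\hookrightarrow 2^N\setminus\tilde{U}_1$. The hypothesis $p^{(\ell)}\leq 1/2$ for $\ell\geq 2$ is exactly what makes this complement operation measure-favourable in aggregate. I expect to close the remaining gap by a weighted-injection argument that exploits the $1$-shifted structure (so that for every $y\in U_0$ and every $j\in y$ the set $y\setminus\{j\}$ lies in $\tilde{U}_1$, which constrains how $U_0$ and $\tilde{U}_1$ can overlap), combined with the inductive hypothesis applied to $U_0$ in $2^N$; alternatively, the required inequality can be obtained as a specialisation of the semidefinite-programming cross-intersection bound developed in the main body of this paper, of which Theorem~\ref{FFFLO} appears as a particular case. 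The equality characterisation then follows by tracing the extremal case through each reduction: when $p^{(1)}>p^{(\ell)}$ for $\ell\geq 2$, every non-trivial shift strictly increases $\mu_{\bm{p}}$, so the extremiser is already upward closed and $1$-shifted, and pushing equality through the above inequality forces $U_0=\emptyset$ and $\tilde{U}_1=2^N$, i.e., $U=\{x\in\Omega:1\in x\}$.
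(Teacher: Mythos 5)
The paper does not actually prove Theorem~\ref{FFFLO} within itself: it cites \cite{FFFLO1986SIAM}, and observes that Theorem~\ref{weighted-thm} (proved in Sections~\ref{sec: general problem}--\ref{sec: proof} by the SDP method) implies it upon taking $\bm{p}_1=\bm{p}_2=\bm{p}$ and $U_1=U_2=U$. Your proposal of a shifting/induction proof is therefore a genuinely different route; the difficulty is that it does not reach the finish line.

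The reductions you carry out are fine: replacing $U$ by its upward closure, applying the compressions $S_{1\ell}$ (which raise the measure since $p^{(1)}\geqslant p^{(\ell)}$, and preserve the intersecting property), and decomposing $\mu_{\bm{p}}(U)=(1-p^{(1)})\nu(U_0)+p^{(1)}\nu(\tilde{U}_1)$. But you explicitly stop at what you call ``the main obstacle,'' namely
\[
(1-p^{(1)})\,\nu(U_0)\;\leqslant\;p^{(1)}\,\nu\bigl(2^{N}\setminus\tilde{U}_1\bigr),
\]
and neither the complementation injection nor the inductive hypothesis closes it. The injection $y\mapsto N\setminus y$ from $U_0$ into $2^{N}\setminus\tilde{U}_1$ would suffice if one had the pointwise bound $(1-p^{(1)})\nu(\{y\})\leqslant p^{(1)}\nu(\{N\setminus y\})$, but this is false in general: the ratio
\[
\frac{\nu(\{y\})}{\nu(\{N\setminus y\})}=\prod_{\ell\in y}\frac{p^{(\ell)}}{1-p^{(\ell)}}\prod_{k\in N\setminus y}\frac{1-p^{(k)}}{p^{(k)}}
\]
can be arbitrarily large when $|y|$ is small relative to $|N|$ and the $p^{(k)}$ are small, whereas the right side $p^{(1)}/(1-p^{(1)})$ can be less than $1$. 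The $1$-shifted structure does rule out some small $y$ (e.g.\ $|y|=1$), but you do not show it rules out \emph{all} offending configurations, and the inductive hypothesis applied to $U_0$ only yields $\nu(U_0)\leqslant\max_{\ell\in N}p^{(\ell)}$, which does not by itself control $\nu(\tilde{U}_1)$. The inequality you are stuck on is exactly a cross-intersecting measure bound of the type Theorem~\ref{weighted-thm} is designed to deliver; falling back on that theorem, as you suggest as an ``alternative,'' would make the entire shifting framework redundant and is the route the paper itself takes. As written, then, the argument has a real gap, and the equality characterisation inherits it since it relies on ``pushing equality through'' the very inequality that is not established.
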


We extend Theorem~\ref{FFFLO} to \emph{cross-intersecting} families.
Throughout the paper (except in Section \ref{sec: general problem}), we use the following notation.
Let $\Omega_1,\Omega_2$ be distinct copies of $\Omega$.
For $i=1,2$, let $\bm{p}_i=(p_i^{(1)},\dots,p_i^{(n)})$ be a probability vector.
For notational convenience, we set
\begin{equation*}
p_i:=p_i^{(1)}.
\end{equation*}
Let $\mu_i:=\mu_{\bm{p}_i}$ be the product measure on $\Omega_i$ with respect to $\bm{p}_i$.
We say that $U_1\subset \Omega_1,U_2\subset \Omega_2$ are \emph{cross-intersecting} if $x\cap y\neq\emptyset$ for all $x\in U_1,y\in U_2$.
For $i=1,2$, and $\ell\in[n]$, let
\begin{equation*}
	U_i^{(\ell)}:=\{x\in\Omega_i:\ell\in x\},
\end{equation*}
which is an intersecting family with $\mu_i(U_i^{(\ell)})=p_i^{(\ell)}$.

Our main result is as follows:

\begin{thm}\label{weighted-thm}
Let $\mu_1,\mu_2$ be the product measures defined above, and assume that
\begin{equation}\label{assumption on p}
	p_i=\max\{p_i^{(\ell)}:\ell\in [n]\} \qquad (i=1,2),
\end{equation}
and that $p_1^{(\ell)},p_2^{(\ell)}\leqslant 1/2$ for $\ell\geqslant 2$.
If $U_1\subset\Omega_1,U_2\subset\Omega_2$ are cross-intersecting, then
\begin{equation}\label{bound}
	\mu_1(U_1) \mu_2(U_2)\leqslant p_1p_2.
\end{equation}
Moreover, unless $p_1=p_2=1/2$ and $|w|\geqslant 3$, equality holds if and only if $U_1=U_1^{(\ell)},U_2=U_2^{(\ell)}$ for some $\ell\in w$, where
\begin{equation}\label{w}
	w=w_{\bm{p}_1,\bm{p}_2}:=\bigl\{\ell\in [n]:(p_1^{(\ell)},p_2^{(\ell)})=(p_1,p_2)\bigr\}.
\end{equation}
\end{thm}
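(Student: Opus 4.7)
The natural setting is the bipartite \emph{disjointness graph} on $\Omega_1\sqcup\Omega_2$ whose edges are pairs $(x,y)$ with $x\cap y=\emptyset$; cross-intersecting $(U_1,U_2)$ are precisely its cross-independent sets, so Theorem~\ref{weighted-thm} is a weighted cross-independence bound. I would apply the measure-version Hoffman-type SDP bound from Section~\ref{sec: general problem} with an appropriate dual ``witness'', namely a matrix $B\in\mathbb{R}^{\Omega_1\times\Omega_2}$ supported compatibly with the disjointness relation and satisfying a positive-semidefinite condition relating it to $\mu_1,\mu_2$, whose certificate value is exactly $\sqrt{p_1 p_2}$. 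Pairing $B$ against $\mathbf{1}_{U_1}\mathbf{1}_{U_2}^{T}$ and invoking the cross-intersecting hypothesis then delivers~\eqref{bound}.

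The main step is to construct $B$ using the coordinatewise tensor structure. Both the disjointness relation on $\Omega$ and the product measures factor coordinatewise, so the natural ansatz is $B=B_1\otimes\cdots\otimes B_n$ with each $B_\ell$ a $2\times 2$ matrix on the local state space $\{\emptyset,\{\ell\}\}^2$ that encodes the local disjointness pattern. Since Kronecker products multiply both vanishing patterns and spectra, the global SDP constraint decomposes into local $2\times 2$ conditions parametrized by $(p_1^{(\ell)},p_2^{(\ell)})$. I would take $B_1$ to be the explicit extremizer of the $n=1$ problem with parameters $(p_1,p_2)$ — an elementary two-variable optimization with closed form — and, for $\ell\geqslant 2$, normalize $B_\ell$ so that its local certificate value equals $1$. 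The hypothesis $p_1^{(\ell)},p_2^{(\ell)}\leqslant 1/2$ at $\ell\geqslant 2$ is precisely what makes this normalized local inequality go through, via monotonicity of the relevant $2\times 2$ spectral quantity in the measure parameters, while \eqref{assumption on p} ensures that no other coordinate can overtake coordinate $1$.

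The main obstacle is the equality/uniqueness analysis. Equality in the Kronecker-product certificate forces equality in every local factor, which in turn forces $p_1^{(\ell)}=p_1$ and $p_2^{(\ell)}=p_2$ at each coordinate actually used by $U_1,U_2$ — i.e., $\ell\in w$ — and then rigidity of the local $n=1$ extremizer should pin $U_i$ down as the star $U_i^{(\ell)}$. The snag is that the local $2\times 2$ problem loses rigidity at the corner $p_1^{(\ell)}=p_2^{(\ell)}=1/2$: the SDP becomes degenerate there and admits a continuous family of local equality configurations, which can be tensored together to produce genuine new global extremizers whenever three or more coordinates sit at this corner. Making this degeneracy analysis sharp — identifying exactly which configurations arise when $p_1=p_2=1/2$ and $|w|\geqslant 3$, and ruling out spurious extremizers in every other case — is the most delicate step of the proof.
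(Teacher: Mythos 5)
Your plan captures the paper's argument for the regime $p_1,p_2\leqslant 1/2$: recast cross-intersection as cross-independence in the bipartite disjointness graph, exploit the coordinatewise tensor structure of both the relation and the product measures, and feed a Kronecker-product dual certificate into the SDP bound of Theorem~\ref{sdp-thm}. Your observation that the degeneracy at $p_1^{(\ell)}=p_2^{(\ell)}=1/2$ causes the $|w|\geqslant 3$ exceptions is also on target (though since the theorem \emph{excludes} that case from the uniqueness claim, you need only exhibit the exceptions, not classify them).

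The genuine gap is that the theorem permits $p_1>1/2$ (only coordinates $\ell\geqslant 2$ are bounded by $1/2$), and in that regime the tensor-product dual certificate you describe is not merely harder to tune — it is \emph{infeasible}. The dual (D) requires a nonnegative slack matrix $Z\geqslant 0$, and the Kronecker ansatz takes the diagonal blocks of $Z$ to be $\varepsilon_i\Delta_iA_{i,i}$ with $\varepsilon_i\geqslant 0$ (as in \eqref{values}); but the local matrix $A_{1,1}$ has $(\emptyset,\emptyset)$-entry $1-p_1/q_1<0$ once $p_1>1/2$, so $Z$ picks up negative entries. You also cannot simply drop the slack: with $\varepsilon_1=\varepsilon_2=0$, already for $n=1$ the conditions \eqref{psd1} and \eqref{psd2} force $(1-\sqrt{p_1p_2})/2\leqslant\eta\leqslant\sqrt{q_1q_2}/2$, which is empty unless $p_1=p_2$ since $\sqrt{q_1q_2}+\sqrt{p_1p_2}\leqslant 1$ with equality only at $p_1=p_2$. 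The paper therefore treats $p_1>1/2$ by a separate, non-spectral step: pass to co-complexes, use the monotone shift Lemma~\ref{monotone} (Fishburn et al.) to show $\mu_i(U_i)\leqslant (p_i/\tilde p_i)\mu_{\tilde{\bm p}_i}(U_i)$ with $\tilde p_1:=\max\{p_1^{(\ell)}:\ell\geqslant 2\}\leqslant 1/2$, and only then invoke the SDP result for $\tilde{\bm p}_1,\tilde{\bm p}_2$. Your outline says nothing about this reduction, and without it the certificate argument never starts when $p_1>1/2$.

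A secondary, smaller shortfall: the uniqueness analysis needs more than ``rigidity of a single local extremizer.'' The paper keeps an entire one-parameter family of dual optima \eqref{epsilon-eta}; by varying $\varepsilon_2$ one forces $S^{(z)}\succ 0$ for all $|z|\geqslant 2$ when $p_1<1/2$, killing all but the degree-$\leqslant 1$ part of $\bm{x}_1$. One then uses $Z\bullet X=0$ to deduce that $U_1$ is intersecting, rewrites $\bm{x}_1$ in the up-set basis $\bm{y}^{(z)}$, and runs a short combinatorial case analysis (Subsection~\ref{case:1/2=p_1>p_2}). You correctly flag rigidity as the delicate step, but it is as much combinatorics as spectral degeneracy.
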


EKR theorems for cross-intersecting families were previously studied, e.g., by Pyber \cite{Pyber1986JCTA} and Matsumoto and Tokushige \cite{MT1989JCTA} for uniform families of subsets, and by Suda and Tanaka \cite{ST2014BLMS} for uniform families of subspaces.
It should be remarked that if we apply Theorem \ref{weighted-thm} to the situation of Theorem \ref{FFFLO} with $\bm{p}_1=\bm{p}_2=\bm{p}$ and $U_1=U_2=U$, then the result is in fact stronger than Theorem \ref{FFFLO}, especially when $p^{(1)}<1/2$.
Theorem~\ref{weighted-thm} also generalizes several known results.
For example, Tokushige \cite{Tokushige2010JCT} obtained the bound \eqref{bound} under the following additional assumption:
\begin{equation*}
	\frac{1}{2} > p_1^{(1)}=\cdots=p_1^{(n)}, \quad \frac{1}{2} > p_2^{(1)}=\cdots=p_2^{(n)}.
\end{equation*}
Recently, Borg \cite{Borg2012pre} proved Theorem~\ref{weighted-thm} in the following two cases:
\begin{itemize}
\item[(i)] $p_1^{(\ell)}=p_2^{(\ell)}=1/m_{\ell}$ for some integers $m_{\ell}$ ($\ell\in[n]$) with $2\leqslant m_1\leqslant m_2\leqslant \cdots\leqslant m_n$;
\item[(ii)] $p_1^{(\ell)}=1/m_{\ell}$, $p_2^{(\ell)}=1/m'_{\ell}$ for some integers $m_{\ell},m'_{\ell}$ ($\ell\in[n]$) with $3\leqslant m_1\leqslant m_2\leqslant \cdots\leqslant m_n$ and $3\leqslant m'_1\leqslant m'_2\leqslant \cdots\leqslant m'_n$.
\end{itemize}
His proof is based on the shifting technique, and assuming that $\bm{p}_1,\bm{p}_2$ are decreasing sequences, as in (i) and (ii) above, seems inevitable to ensure that measures do not decrease while shifting.

We now comment on the exceptional cases in Theorem~\ref{weighted-thm}.
Indeed, when $p_1=p_2=1/2$ and $|w|\geqslant 3$, the pairs $U_1^{(\ell)},U_2^{(\ell)}$ with $\ell\in w$ are not necessarily the only cross-intersecting families having $p_1p_2=1/4$ as the product of measures.
The following two examples illustrate some other structures with this property:

\begin{example}\label{ex-3}
Assume that $n\geqslant 3$.
For $i=1,2$, let $U_i=\{x\in\Omega_i:|x\cap[3]|\geqslant 2\}$.
Then, $U_1,U_2$ are cross-intersecting. 
If $p_1^{(\ell)}=p_2^{(\ell)}=1/2$ for $\ell\in[3]$, then $\mu_1(U_1)\mu_2(U_2)=1/4$.
\end{example}

\begin{example}\label{ex-4}
Assume that $n\geqslant 4$.
For $i=1,2$, let $U_i=\{x\in\Omega_i:x\cap[4]\in C_i\}$, where
\begin{align*}
	C_1&:=\bigl\{\{1,2\},\{3,4\},\{1,3\},\{1,2,3\},\{1,2,4\},\{1,3,4\},\{2,3,4\},\{1,2,3,4\}\bigr\}, \\
	C_2&:=\bigl\{ \{1,4\},\{2,3\}, \{1,3\},\{1,2,3\},\{1,2,4\},\{1,3,4\},\{2,3,4\},\{1,2,3,4\}\bigr\}.
\end{align*}
Then, $U_1,U_2$ are cross-intersecting, but neither $U_1$ nor $U_2$ is intersecting. 
If $p_1^{(\ell)}=p_2^{(\ell)}=1/2$ for $\ell\in[4]$, then $\mu_1(U_1)\mu_2(U_2)=1/4$.
\end{example}

There might be several directions to extend Theorem~\ref{weighted-thm}.
The following conjecture would be one of the most interesting possible extensions in the sense that, if true, each family in the optimal case is intersecting, but not necessarily measure maximal.

\begin{conj}\label{relaxed version}
Theorem~\ref{weighted-thm} still holds if we replace the condition \eqref{assumption on p} with
\begin{equation}\label{weak assumption}
	p_1p_2=\max\bigl\{p_1^{(\ell)}p_2^{(\ell)}:\ell\in [n]\bigr\},
\end{equation}
where we understand in this case that $w=\bigl\{\ell\in [n]:p_1^{(\ell)}p_2^{(\ell)}=p_1p_2\bigr\}$.
\end{conj}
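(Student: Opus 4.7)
My plan is to adapt the semidefinite-programming machinery developed earlier in the paper (and used to prove Theorem~\ref{weighted-thm}) to the weaker hypothesis~\eqref{weak assumption}. As a preliminary symmetry reduction, applying any permutation $\sigma\in S_n$ simultaneously to $U_1, U_2, \bm{p}_1, \bm{p}_2$ preserves cross-intersection and all product measures, so we may assume $1\in w$, i.e.\ $(p_1^{(1)},p_2^{(1)})=(p_1,p_2)$. Call a coordinate $\ell\geqslant 2$ \emph{bad} if $p_1^{(\ell)}>p_1$ or $p_2^{(\ell)}>p_2$; under~\eqref{weak assumption} these two inequalities cannot both hold at the same $\ell$, since otherwise $p_1^{(\ell)}p_2^{(\ell)}>p_1p_2$. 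When there is no bad coordinate, condition~\eqref{assumption on p} holds and Theorem~\ref{weighted-thm} applies directly; the remaining task is to handle bad coordinates.

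The Kneser-type bipartite disjointness graph on $\Omega_1\times\Omega_2$ factors as a tensor product of the $n$ single-coordinate pieces, and the SDP dual certificate should inherit this tensor structure with each factor determined only by the pair $(p_1^{(\ell)},p_2^{(\ell)})$. I would keep the same factor at every non-bad coordinate and, at each bad coordinate $\ell$, replace the local factor by a positive-semidefinite one whose contribution to the global dual value is still $p_1^{(\ell)}p_2^{(\ell)}\leqslant p_1p_2$, thereby using the slack granted by the weak assumption. If such per-coordinate replacements exist and are compatible with the rest of the certificate, the tensor-product structure then yields the target bound~$p_1p_2$.

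The hard part will be the construction of the modified bad-coordinate factor meeting both positive semidefiniteness and the correct dual value simultaneously. The natural candidates --- rescalings, convex combinations of the two single-coordinate eigenvectors, or mixings with the coordinate-$1$ factor --- tend to fail one requirement or the other, because the alignment with coordinate~$1$ that underlies the original certificate is broken at a bad coordinate in a way that only the product $p_1^{(\ell)}p_2^{(\ell)}$, and not the individual probabilities, can compensate for. As a backup I would try induction on the number of bad coordinates, but any measure-preserving transformation moving a bad $\ell$ into $w$ must adjust $U_1,U_2$ while preserving cross-intersection, and the shifting-based manipulations used by Borg genuinely change $\mu_i$ once $\bm{p}_i$ is non-constant, which is precisely the setting forced by~\eqref{weak assumption} in the presence of bad coordinates. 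The equality analysis will become correspondingly more delicate, since, as already suggested by Examples~\ref{ex-3} and~\ref{ex-4}, extremal families under~\eqref{weak assumption} may well be supported on several balanced coordinates rather than a single one.
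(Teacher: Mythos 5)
This statement is labeled \emph{Conjecture} in the paper, and the authors do not prove it; the closest result in the paper is Theorem~\ref{thm 1/3}, which establishes the conjecture under the additional hypothesis that $p_1^{(\ell)},p_2^{(\ell)}\leqslant 1/3$ for all $\ell$. Your proposal is likewise not a proof --- it is a plan that, as you candidly admit, stalls at exactly the point where the real difficulty lives --- so there is nothing to certify; but your diagnosis of where the obstruction sits is close to, though not identical with, what actually blocks the paper's method.

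Here is the discrepancy worth flagging. You imagine a per-coordinate, tensor-factor replacement of the SDP certificate at ``bad'' coordinates. But the dual certificate in Section~\ref{sec: proof of bound when p_1 at most 1/2} is not modular in that sense: the scalars $\varepsilon_1,\varepsilon_2,\eta$ in \eqref{S}--\eqref{values} are global, and the PSD check splits, after conjugating by $V_1\oplus V_2$, into a family of $2\times 2$ blocks $S^{(z)}$ indexed by $z\subseteq[n]$, each coupling \emph{all} the coordinate data through the products $c_{i,j}^{(z)}$. The paper's Theorem~\ref{thm 1/3} takes the extremal choice $\varepsilon_1=\varepsilon_2=\sqrt{p_1p_2}/2$, $\eta=1/2$, under which the $|z|=1$ feasibility condition \eqref{det:e1=e2} becomes \emph{exactly} the hypothesis \eqref{weak assumption}, so singleton coordinates (bad or not) are handled for free; the monotonicity of $(1-t)/|t|$ then reduces everything to $|z|\leqslant 2$. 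The genuine bottleneck, which neither you nor the paper overcome in the range $1/3<p_i^{(\ell)}\leqslant 1/2$, is the $|z|=2$ block: the inequality
\begin{equation*}
p_1p_2\bigl(1-p_1^{(\ell)}-p_1^{(\ell')}\bigr)\bigl(1-p_2^{(\ell)}-p_2^{(\ell')}\bigr)\geqslant p_1^{(\ell)}p_1^{(\ell')}p_2^{(\ell)}p_2^{(\ell')}
\end{equation*}
needs $p_i^{(\ell)}\leqslant 1/3$ to be verified from \eqref{weak assumption} alone. So the obstacle is not, as you frame it, a missing single-coordinate factor; it is a genuine two-coordinate interaction that the weak hypothesis does not control once some $p_i^{(\ell)}$ exceeds $1/3$. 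Any successful attack on the conjecture would need either a different choice of the global parameters $(\varepsilon_1,\varepsilon_2,\eta)$ adapted to the offending pair $z=\{\ell,\ell'\}$, or an entirely different certificate; the per-coordinate replacement you sketch cannot be made to produce the required $S^{(z)}\succcurlyeq 0$ for $|z|=2$ without such a global change.
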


\noindent
The following result supports Conjecture~\ref{relaxed version}:

\begin{thm}\label{thm 1/3}
Let $\mu_1,\mu_2$ be the product measures defined above, and assume that \eqref{weak assumption} holds (instead of \eqref{assumption on p}), and that $p_1^{(\ell)},p_2^{(\ell)}\leqslant 1/3$ for $\ell\in[n]$.
If $U_1\subset\Omega_1,U_2\subset\Omega_2$ are cross-intersecting, then \eqref{bound} holds.
Moreover, equality holds in \eqref{bound} if and only if $U_1=U_1^{(\ell)}$, $U_2=U_2^{(\ell)}$ for some $\ell\in w$, where $w$ is as in Conjecture~\ref{relaxed version}.
\end{thm}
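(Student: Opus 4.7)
The plan is to reduce Theorem \ref{thm 1/3} to Theorem \ref{weighted-thm}, using the SDP framework of Section \ref{sec: general problem} to bridge the gap between the hypotheses \eqref{weak assumption} and \eqref{assumption on p}, with the stronger bound $p_i^{(\ell)}\leqslant 1/3$ providing the necessary slack.

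As a preliminary step I would permute the coordinates of $[n]$ so that $\ell=1$ attains the maximum in \eqref{weak assumption}, that is, $p_1^{(1)}=p_1$ and $p_2^{(1)}=p_2$. The crucial structural consequence of \eqref{weak assumption} is that for every $\ell$ at least one of $p_1^{(\ell)}\leqslant p_1$ or $p_2^{(\ell)}\leqslant p_2$ must hold: if $p_1^{(\ell)}>p_1$, then the maximality of $p_1p_2$ forces $p_2^{(\ell)}<p_2$. If both inequalities happen to hold for every $\ell$, then \eqref{assumption on p} is automatically satisfied and Theorem \ref{weighted-thm} applies directly to give both \eqref{bound} and the desired equality characterization (the exceptional $p_1=p_2=1/2$ case of Theorem \ref{weighted-thm} cannot arise because $p_i\leqslant 1/3$).

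In the main case some coordinate $\ell_0\neq 1$ has, say, $p_1^{(\ell_0)}>p_1$ and therefore $p_2^{(\ell_0)}<p_2$. Here I would invoke the general SDP bound from Section \ref{sec: general problem}: the inequality $\mu_1(U_1)\mu_2(U_2)\leqslant p_1p_2$ follows from the existence of a positive semidefinite certificate built coordinate-by-coordinate from rank-one blocks and tuned to the extremal pair $(U_1^{(1)},U_2^{(1)})$. In the proof of Theorem \ref{weighted-thm}, positive semidefiniteness of this certificate is secured by \eqref{assumption on p} together with $p_i^{(\ell)}\leqslant 1/2$. My proposal is to check that the \emph{same} certificate (or a mild perturbation of it) remains positive semidefinite under \eqref{weak assumption} precisely when $p_i^{(\ell)}\leqslant 1/3$, by controlling the coordinate blocks at those $\ell$ where $p_i^{(\ell)}>p_i$ via the inequality $p_1^{(\ell)}p_2^{(\ell)}\leqslant p_1p_2$ and the pointwise bound $p_i^{(\ell)}\leqslant 1/3$.

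For the uniqueness part, the hypothesis $p_i^{(\ell)}\leqslant 1/3$ makes every per-coordinate inequality in the positive-semidefiniteness check strict except when $(p_1^{(\ell)},p_2^{(\ell)})=(p_1,p_2)$, which forces the equality in \eqref{bound} to localize at coordinates $\ell\in w$; combined with the equality characterization in Theorem \ref{FFFLO} applied fibrewise over such an $\ell$, this will yield $U_1=U_1^{(\ell)}$ and $U_2=U_2^{(\ell)}$ for a common $\ell\in w$. The principal obstacle I anticipate is this positive-semidefiniteness verification of the certificate under the weaker hypothesis \eqref{weak assumption}: it reduces to a polynomial positivity check in the variables $p_i^{(\ell)}$ that I expect to be tight precisely at $p_i^{(\ell)}=1/3$, which would at once explain the role of the threshold and why Conjecture \ref{relaxed version} remains open above it.
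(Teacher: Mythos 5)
Your high-level instinct is right: the paper does prove Theorem~\ref{thm 1/3} by running the SDP machinery of Section~\ref{sec: general problem} and Subsection~\ref{sec: proof of bound when p_1 at most 1/2} again, and you correctly observe that the exceptional $p_1=p_2=1/2$ case cannot arise. However, the core of your plan --- ``check that the \emph{same} certificate (or a mild perturbation of it) remains positive semidefinite'' --- leaves exactly the hard step undone, and the naive version of it is actually false. The certificate used to establish the bound \eqref{bound} in Lemma~\ref{lem:ineq} has $\varepsilon_2=0$, and the verification \eqref{det:odd}--\eqref{det:even} leans crucially on the inequalities $p_i^{(\ell)}/q_i^{(\ell)}\leqslant p_i/q_i$ coming from \eqref{assumption on p}. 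Under the weaker \eqref{weak assumption} these inequalities can fail, and the $\varepsilon_2=0$ certificate is then not positive semidefinite. Concretely, take $p_1=1/3$, $p_2=0.03$, and a coordinate $\ell$ with $p_1^{(\ell)}=0.03$, $p_2^{(\ell)}=1/3$; all $p_i^{(\ell)}\leqslant 1/3$ and \eqref{weak assumption} holds with equality at $\ell$, yet the $|z|=1$ condition
\[
p_1p_2\;\geqslant\;\bigl|c_{1,1}^{(z)}\bigr|\Bigl(q_2^2\,\bigl|c_{2,2}^{(z)}\bigr|-(p_1-p_2)p_2\Bigr)
\]
evaluates to $0.01\geqslant 0.0143$, which is false. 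So this is not a matter of rechecking the same proof under a relaxed hypothesis.

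The actual proof in the paper chooses the \emph{other} endpoint of the one-parameter family \eqref{epsilon-eta}, namely $\varepsilon_2=\sqrt{p_1p_2}/2$, which forces $\varepsilon_1=\sqrt{p_1p_2}/2$ and $\eta=1/2$. This symmetric choice is what makes the scheme compatible with \eqref{weak assumption}: the determinant condition \eqref{det:general} collapses to
\[
p_1p_2\left(\frac{1-c_{1,1}^{(z)}}{\bigl|c_{1,1}^{(z)}\bigr|}\right)\!\!\left(\frac{1-c_{2,2}^{(z)}}{\bigl|c_{2,2}^{(z)}\bigr|}\right)\geqslant 1,
\]
and a monotonicity observation about $(1-t)/|t|$ reduces the verification to $|z|\in\{1,2\}$; the $|z|=1$ case is literally \eqref{weak assumption}, and the $|z|=2$ case is where the threshold $p_i^{(\ell)}\leqslant 1/3$ (giving $1-p_i^{(\ell)}-p_i^{(\ell')}\geqslant 1/3\geqslant p_i^{(\ell')}$) is used. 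Your proposal never identifies this specific choice, and the phrase ``tuned to the extremal pair'' does not single it out. Additionally, your uniqueness sketch (``apply Theorem~\ref{FFFLO} fibrewise'') is not what the paper does and is not precise enough to evaluate; the paper instead shows $Z\bullet X=0$ forces $U_1,U_2$ intersecting, locates the support of the characteristic vectors on $z\in 2^w$ with $|z|\leqslant 2$ via strictness of the PSD conditions, and then repeats the combinatorial case analysis of Subsection~\ref{case:1/2=p_1>p_2}. The preliminary reduction to Theorem~\ref{weighted-thm} when \eqref{assumption on p} happens to hold is correct but unnecessary, since the $\varepsilon_2=\sqrt{p_1p_2}/2$ certificate handles that case uniformly as well.
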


We also conjecture that the optimal structure in Theorem~\ref{weighted-thm} has a stability, namely, if $U_1\subset\Omega_1,U_2\subset\Omega_2$ are cross-intersecting and $\mu_1(U_1)\mu_2(U_2)$ is close to $p_1p_2$, then they are `close' to the pair $U_1^{(\ell)},U_2^{(\ell)}$ for some $\ell\in[n]$ in the following sense:

\begin{conj}\label{stability}
Let $\mu_1,\mu_2$ be the product measures as above, and assume that \eqref{assumption on p} holds.
 If $p_1,p_2<1/2$, then there exists a constant $c=c(p_1,p_2)$ such that, for any cross-intersecting families $U_1\subset\Omega_1,U_2\subset\Omega_2$ with 
\begin{equation*}
	\mu_1(U_1)\mu_2(U_2)>(1-\varepsilon)\,p_1p_2,
\end{equation*}
there is an $\ell\in[n]$ such that
\begin{equation*}
	\max\bigl\{\mu_1(U_1\triangle U_1^{(\ell)}), \mu_2(U_2\triangle U_2^{(\ell)}) \bigr\}<c\sqrt{\varepsilon}. 
\end{equation*}
\end{conj}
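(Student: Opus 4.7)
The plan is to combine a quantitative strengthening of the SDP/Hoffman-type bound underlying Theorem~\ref{weighted-thm} with a $p$-biased Friedgut--Kalai--Naor (FKN) type concentration theorem. Set $f:=\mathbf{1}_{U_1}\in L^2(\Omega_1,\mu_1)$ and $g:=\mathbf{1}_{U_2}\in L^2(\Omega_2,\mu_2)$, and work in their $p_i$-biased Fourier bases $\{\chi_S:S\subset[n]\}$. The key structural fact is that each extremal indicator $\mathbf{1}_{U_i^{(\ell)}}$, $\ell\in w$, has $p_i$-biased Fourier support contained in $\{\emptyset,\{\ell\}\}$, so the \emph{principal eigenspace} is spanned by the constants together with $\{\chi_{\{\ell\}}:\ell\in w\}$. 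Let $P$ denote the orthogonal projection onto the complement of this principal eigenspace.

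\textbf{Step 1 (quantitative Hoffman bound).} The first task is to revisit the dual SDP solution used to prove Theorem~\ref{weighted-thm} and extract a defect inequality of the shape
\begin{equation*}
p_1p_2-\mu_1(U_1)\mu_2(U_2)\ \geqslant\ \gamma\bigl(\mu_2(U_2)\,\|Pf\|_2^2+\mu_1(U_1)\,\|Pg\|_2^2\bigr),
\end{equation*}
with $\gamma=\gamma(p_1,p_2)>0$ a spectral gap. The hypothesis $\mu_1(U_1)\mu_2(U_2)>(1-\varepsilon)p_1p_2$ forces each $\mu_i(U_i)$ to be bounded below by a positive constant depending only on $p_1,p_2$, so both $\|Pf\|_2^2$ and $\|Pg\|_2^2$ are $O(\varepsilon)$. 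Parseval then yields
\begin{equation*}
\sum_{\ell\in w}\hat{f}(\{\ell\})^2\ =\ \mu_1(U_1)\bigl(1-\mu_1(U_1)\bigr)-O(\varepsilon),
\end{equation*}
and the analogous identity for $g$, so $f,g$ each concentrate their nontrivial Fourier mass on $\{\chi_{\{\ell\}}:\ell\in w\}$ up to $O(\varepsilon)$.

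\textbf{Step 2 (FKN and matching the index).} Now apply the $p$-biased FKN theorem (e.g.\ Nayar's extension to biased cubes) to the Boolean function $f$: since its $L^2$ mass is $O(\varepsilon)$-concentrated on $\{\chi_\emptyset\}\cup\{\chi_{\{\ell\}}:\ell\in w\}$, $f$ is $O(\sqrt\varepsilon)$-close in $L^2$ to either $\mathbf{1}_{U_1^{(\ell)}}$ or $1-\mathbf{1}_{U_1^{(\ell)}}$ for some $\ell\in w$. Since $p_1<1/2$, the complementary option would force $\mu_1(U_1)\approx 1-p_1>1/2$, which together with the same dichotomy for $g$ and cross-intersection is incompatible (any pair of nearly-complementary dictators admits a positive-measure collection of disjoint pairs). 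Hence $\|f-\mathbf{1}_{U_1^{(\ell)}}\|_2^2=\mu_1(U_1\triangle U_1^{(\ell)})=O(\sqrt\varepsilon)$, and likewise for $g$ at some $\ell'\in w$; one checks $\ell=\ell'$ by observing that distinct indices leave a positive $\mu_1\otimes\mu_2$-measure of disjoint pairs in $U_1^{(\ell)}\times U_2^{(\ell')}$, contradicting the $O(\sqrt\varepsilon)$-approximation of each $U_i$ by the respective dictator.

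\textbf{Main obstacle.} The crux is producing the defect inequality in Step~1 with a uniform spectral gap $\gamma(p_1,p_2)>0$. The optimum in Theorem~\ref{weighted-thm} is attained at $|w|$ distinct extremal pairs, so no single rank-one dual certificate suffices; a natural route is to average the dual matrices associated with each $\ell\in w$ (or to assemble a block-diagonal dual simultaneously witnessing all extremals), but showing that the averaged certificate retains strictly positive slack on the complement of the entire principal eigenspace requires a careful analysis of the eigenstructure of the SDP solution. A secondary difficulty is that the constants in biased FKN deteriorate as $p_i\to 1/2$, which is consistent with the strict hypothesis $p_1,p_2<1/2$ and with Examples~\ref{ex-3}--\ref{ex-4} ruling out any such stability at $p_1=p_2=1/2$.
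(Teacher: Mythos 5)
Note first that the statement you are trying to prove is labeled a \emph{Conjecture} in the paper, and the paper offers no proof of it: it only remarks that the special case where all the $p_i^{(\ell)}$ coincide follows from \cite{Tokushige2013CPC}. So there is no ``paper's own proof'' to compare against, and what follows is an assessment of your sketch on its own terms.

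Your two-step plan (quantitative defect inequality from the SDP dual, followed by a biased FKN theorem) is a reasonable blueprint, and in fact the ``main obstacle'' you flag is less severe than you suggest. Subsection 3.3.3 already exhibits, for $p_1<1/2$, a \emph{single} dual certificate $S$ (take $\varepsilon_2>0$ small in \eqref{epsilon-eta}) for which $S^{(z)}\succ 0$ precisely when $|z|\geqslant 2$, and the determinant lower bounds \eqref{det:odd}--\eqref{det:even} show the slack on those blocks is controlled by $p_1,p_2$ alone (via $p_i^{(\ell)}/q_i^{(\ell)}\leqslant p_i/q_i<1$). Feeding this into the weak-duality identity $\sqrt{p_1p_2}-\sqrt{\mu_1(U_1)\mu_2(U_2)}=S\bullet X+Z\bullet X$ and the block decomposition $S\bullet X=\sum_z S^{(z)}\bullet\bigl(\theta^{(z)}(\theta^{(z)})^{\mathsf{T}}\bigr)$ (cf.\ \eqref{psd_optimal}) gives exactly the defect inequality you want, with $\gamma=\gamma(p_1,p_2)>0$. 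No averaging over several extremal certificates is needed; this single $S$ already has the required kernel structure, and the one-parameter family \eqref{epsilon-eta} is there precisely so you can pick a point with strict positivity for $|z|\geqslant 2$.

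What does need correction is your choice of principal eigenspace. You project onto the complement of constants together with $\{\chi_{\{\ell\}}:\ell\in w\}$, but for $\ell\notin w$ the block $S^{(\{\ell\})}$ has a smallest eigenvalue that degenerates as $p_i^{(\ell)}\to p_i^-$, so a gap on those degree-one modes cannot depend on $p_1,p_2$ alone, which is what the conjecture demands. The fix is to let $P$ project onto the orthogonal complement of \emph{all} degree-$\leqslant 1$ characters; then the gap is uniform, and FKN yields $O(\varepsilon)$-closeness of $f,g$ to dictators, anti-dictators, or constants indexed by some $\ell\in[n]$ (not necessarily $\ell\in w$), which is consistent with the conjecture's conclusion. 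Finally, the closing case analysis is in the right direction but under-argued: the hypothesis $\mu_1(U_1)\mu_2(U_2)>(1-\varepsilon)p_1p_2$ alone does not exclude, say, $\mu_1(U_1)\approx 1-p_1$ with $\mu_2(U_2)\approx p_1p_2/(1-p_1)$, so to rule out the anti-dictator/constant alternatives and to match indices you must genuinely use the cross-intersecting structure (a positive-measure set of disjoint pairs) together with $Z\bullet X=O(\varepsilon)$, not merely the magnitude of the product; this is the part that would most benefit from being written out in full.
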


\noindent
See \cite{Friedgut2008C} for more about stability.
Conjecture~\ref{stability} is true if all the $p_i^{(\ell)}$ are the same; see \cite{Tokushige2013CPC}. 

Our proofs of Theorem \ref{weighted-thm} and Theorem \ref{thm 1/3} are algebraic in nature, and are based on semidefinite programming (SDP).
Algebraic/spectral techniques have been quite successful in proving EKR type results; see \cite{GM2015B}.
Let $G$ be a regular graph.
(All the graphs we consider in this paper are finite and simple.)
Let $\lambda_{\max}$ and $\lambda_{\min}$ be the largest and the smallest eigenvalues of the adjacency matrix of $G$.
Recall that $U\subset V(G)$ is \emph{independent} if no two vertices in $U$ are adjacent.
Hoffman's famous \emph{ratio bound} states that
\begin{equation*}
	\frac{|U|}{|V(G)|}\leqslant\frac{-\lambda_{\min}}{\lambda_{\max}-\lambda_{\min}}
\end{equation*}
provided $U$ is independent.
The ratio bound is derived easily by giving a feasible solution to (the dual of) an SDP problem defining Lov\'{a}sz's \emph{theta function} \cite{Lovasz1979IEEE}; see also \cite{Delsarte1973PRRS,Schrijver1979IEEE,GM2012B}.
Wilson \cite{Wilson1984C} used the theta function to refine the original EKR theorem \cite{EKR1961QJMO}.

Now, let $G$ be a biregular bipartite graph with bipartition $V(G)=\Omega_1\sqcup\Omega_2$ (where $\Omega_1,\Omega_2$ are arbitrary finite sets), so the degrees are assumed to be constant on each of $\Omega_1,\Omega_2$.
We say that $U_1\subset\Omega_1,U_2\subset\Omega_2$ are \emph{cross-independent} if there are no edges between $U_1$ and $U_2$.
Using the largest two singular values $\sigma_1\geqslant\sigma_2$ of the bipartite adjacency matrix (i.e., one of the off-diagonal blocks of the adjacency matrix) of $G$, we have
\begin{equation*}
	\sqrt{\frac{|U_1||U_2|}{|\Omega_1||\Omega_2|}}\leqslant\frac{\sigma_2}{\sigma_1+\sigma_2}
\end{equation*}
provided $U_1,U_2$ are cross-independent.
There are several cases where this upper bound is sharp; see \cite{Tokushige2013JAC,Tokushige2013CPC}.
This bound can again be strengthened via SDP, and Suda and Tanaka \cite{ST2014BLMS} used this approach to obtain a sharp EKR type bound for cross-intersecting families of subspaces mentioned earlier.

In this paper, the graph we shall consider is the \emph{bipartite disjointness graph} $G$ with bipartition $V(G)=\Omega_1\sqcup\Omega_2$, where $\Omega_1=\Omega_2=2^{[n]}$ as in Theorem \ref{weighted-thm}, and $x\in\Omega_1,y\in\Omega_2$ are adjacent if and only if $x\cap y=\emptyset$.
Thus, that $U_1\subset\Omega_1,U_2\subset\Omega_2$ are cross-intersecting is equivalent to saying that they are cross-independent in $G$.
In Section~\ref{sec: general problem}, we present a general scheme to bound the measures of cross-independent pairs in a bipartite graph using SDP; see Theorem~\ref{sdp-thm}.
In Section~\ref{sec: proof}, we apply the scheme to the bipartite disjointness graph, and prove Theorem~\ref{weighted-thm} by constructing an appropriate feasible solution to the dual of the corresponding SDP problem.
To be more precise, it follows that this approach is applicable to the case where $p_1,p_2\leqslant 1/2$, and when $p_1>1/2$ or $p_2>1/2$, we employ some combinatorial arguments to reduce to the former case.
In Section~\ref{sec: proof of thm3}, we include a proof of Theorem~\ref{thm 1/3}, which is a slight modification of the proof of Lemma~\ref{lem:ineq} from Subsection~\ref{sec: proof of bound when p_1 at most 1/2}.

We end Introduction with some comments about the assumption in Theorem \ref{FFFLO} that $p^{(\ell)}\leqslant 1/2$ for $\ell\geqslant 2$.
Indeed, we were unable to verify whether or not the condition $\ell\geqslant 2$ is best possible.
Here, we bravely conjecture the following:

\begin{conj}
Let $\mu_{\bm{p}}$ be the product measure as in Theorem~\ref{FFFLO}, and assume that $p^{(1)}=\max\{p^{(\ell)}:\ell\in[n]\}$, and that $p^{(\ell)}\leqslant 1/2$ for $\ell\geqslant 3$.
If $U\subset\Omega$ is intersecting, then $\mu_{\bm{p}}(U) \leqslant p^{(1)}$.
Moreover, if $p^{(1)}>p^{(\ell)}$ for $\ell\geqslant 3$, or if $p^{(1)}<1/2$, then equality holds if and only if $U=\{x\in\Omega:\ell\in x\}$ for some $\ell\in[n]$ with $p^{(\ell)}=p^{(1)}$.
\end{conj}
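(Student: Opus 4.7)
The plan is to split into two cases according to the value of $p^{(2)}$. If $p^{(2)}\leqslant 1/2$, then $\bm{p}$ satisfies $p^{(1)}=\max$ and $p^{(\ell)}\leqslant 1/2$ for all $\ell\geqslant 2$, so Theorem~\ref{FFFLO} applies directly. The genuinely new case is $p^{(2)}>1/2$, which forces $p^{(1)}\geqslant p^{(2)}>1/2$, and I plan to handle it as follows.

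I would first replace $U$ by its up-closure, which preserves intersectingness and can only enlarge $\mu_{\bm{p}}(U)$. Conditioning on whether $1\in x$ then gives $\mu_{\bm{p}}(U)=p^{(1)}\mu_{\bm{q}}(V)+(1-p^{(1)})\mu_{\bm{q}}(W)$, where $\bm{q}=(p^{(2)},\ldots,p^{(n)})$, $V=\{x\setminus\{1\}:x\in U,\,1\in x\}$, and $W=\{x\in U:1\notin x\}$, both subsets of $2^{[2,n]}$. Then $V,W$ are up-sets, $W$ is intersecting, and $V,W$ are cross-intersecting. Being cross-intersecting is equivalent to $V\cap\{[2,n]\setminus y:y\in W\}=\emptyset$, and since the latter family has $\mu_{\bm{q}}$-measure equal to $\mu_{\bm{1}-\bm{q}}(W)$, one obtains the sharp bound $\mu_{\bm{q}}(V)+\mu_{\bm{1}-\bm{q}}(W)\leqslant 1$. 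Substituting reduces the desired $\mu_{\bm{p}}(U)\leqslant p^{(1)}$ to proving
\[
p^{(1)}\mu_{\bm{1}-\bm{q}}(W)\geqslant (1-p^{(1)})\mu_{\bm{q}}(W)
\]
for every intersecting up-set $W\subseteq 2^{[2,n]}$.

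To establish this inequality, I would condition further on element~$2$: write $W=W_2\sqcup W_{\bar 2}$ by whether $2\in y$, and let $W_2'=\{y\setminus\{2\}:y\in W_2\}$, so that both $W_2'$ and $W_{\bar 2}$ are up-sets in $2^{[3,n]}$ with $W_{\bar 2}\subseteq W_2'$. Writing $a=\mu_{\bm{q}'}(W_{\bar 2})$, $a'=\mu_{\bm{1}-\bm{q}'}(W_{\bar 2})$, $b=\mu_{\bm{q}'}(W_2')$, $b'=\mu_{\bm{1}-\bm{q}'}(W_2')$ with $\bm{q}'=(p^{(3)},\ldots,p^{(n)})$, a direct expansion yields the identity
\[
p^{(1)}\mu_{\bm{1}-\bm{q}}(W)-(1-p^{(1)})\mu_{\bm{q}}(W)=p^{(1)}(1-p^{(2)})(b'-b)+(p^{(1)}-p^{(2)})\,b+p^{(1)}p^{(2)}(a'-a)+(p^{(1)}+p^{(2)}-1)\,a.
\]
Since every entry of $\bm{q}'$ is at most $1/2$ and $W_2',W_{\bar 2}$ are up-sets, monotonicity of product measure on up-sets gives $b'\geqslant b$ and $a'\geqslant a$. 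In the case at hand $p^{(1)}\geqslant p^{(2)}$ and $p^{(1)}+p^{(2)}>1$, so each of the four summands is nonnegative and the inequality follows.

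The hard part will be locating this chain of reductions: the sharp additive form $\mu_{\bm{q}}(V)+\mu_{\bm{1}-\bm{q}}(W)\leqslant 1$ of the cross-intersecting bound is essential, since the multiplicative bound of Theorem~\ref{weighted-thm} is unavailable when $p^{(2)}>1/2$, and conditioning on element~$2$ is what exposes the four-term identity whose positivity is dictated precisely by $p^{(1)}\geqslant p^{(2)}>1/2$. For the equality characterization, tracking when each summand vanishes should pin $U$ down to a star in non-degenerate situations; boundary cases such as $p^{(\ell)}=1/2$ for some $\ell\geqslant 3$ admit additional extremal configurations like the triangle $\{x:|x\cap\{1,2,\ell\}|\geqslant 2\}$, suggesting that the equality statement of the conjecture as written may need a slight refinement.
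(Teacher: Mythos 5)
This statement is an open conjecture in the paper; the authors explicitly say they were unable to decide whether the ``$\ell\geqslant 2$'' hypothesis in Theorem~\ref{FFFLO} could be weakened to ``$\ell\geqslant 3$'', so there is no proof in the paper to compare against. Your argument, however, appears to settle the inequality part affirmatively, and it does so by a route entirely different from the SDP machinery the paper uses for its other results.

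Your reduction is sound. In the new case $p^{(2)}>1/2$ (hence $p^{(1)}\geqslant p^{(2)}>1/2$), passing to the up-closure is harmless, and the conditioning on element~$1$ correctly gives $\mu_{\bm{p}}(U)=p^{(1)}\mu_{\bm{q}}(V)+(1-p^{(1)})\mu_{\bm{q}}(W)$ with $V,W$ up-sets in $2^{[2,n]}$ that are cross-intersecting. The complementation identity $\mu_{\bm{q}}(\{[2,n]\setminus y:y\in W\})=\mu_{\bm{1}-\bm{q}}(W)$, together with the disjointness of $V$ from that complemented family, gives $\mu_{\bm{q}}(V)+\mu_{\bm{1}-\bm{q}}(W)\leqslant 1$, and this reduces the conjecture to $p^{(1)}\mu_{\bm{1}-\bm{q}}(W)\geqslant(1-p^{(1)})\mu_{\bm{q}}(W)$. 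I checked your four-term identity after conditioning on element~$2$; it is algebraically correct, and each summand is indeed nonnegative because $b'\geqslant b$ and $a'\geqslant a$ (monotonicity of the measure of an up-set in the coordinate probabilities, using $p^{(\ell)}\leqslant 1/2\leqslant 1-p^{(\ell)}$ for $\ell\geqslant 3$), $p^{(1)}-p^{(2)}\geqslant 0$, and $p^{(1)}+p^{(2)}-1>0$. Note that the intersecting hypothesis on $W$ is never used, which is fine: you prove the stronger statement for all up-sets. This is an elegant, elementary argument that sidesteps the sign obstruction that blocks the paper's $A_{1,1}$-based SDP construction when $p_1>1/2$.

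Your final remark is also correct and significant: the conjecture's equality characterization as written is false. Take $p^{(1)}=p^{(2)}=p>1/2$, $p^{(3)}=1/2$, and $U=\{x:|x\cap[3]|\geqslant 2\}$. Then $U$ is intersecting and a short computation gives $\mu_{\bm{p}}(U)=p^2+p(1-p)=p=p^{(1)}$, yet $U$ is not a star, while the conjecture's side condition ``$p^{(1)}>p^{(\ell)}$ for $\ell\geqslant 3$'' is satisfied since $p>1/2=p^{(3)}$. So the equality statement needs a further hypothesis, e.g.\ $p^{(\ell)}<1/2$ strictly for $\ell\geqslant 3$ (or $p^{(1)}>p^{(2)}$), to rule out this configuration. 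In short: you have proved the bound, resolved the open question affirmatively, and at the same time located a genuine flaw in the conjectured equality characterization; the one thing left undone is a complete case analysis of when all four summands vanish, which would yield the corrected equality statement.
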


\noindent
If this conjecture is true, then the assumption that $p^{(\ell)}\leqslant 1/2$ for $\ell\geqslant 3$ is best possible, as the following example shows:
\begin{example}
Let $\bm{p}=(p^{(1)},\dots,p^{(n)})$ be such that $p:=p^{(1)}=p^{(2)}=p^{(3)}>1/2$, and let $U=\{x\in\Omega:|x\cap[3]|\geqslant 2\}$. 
Then $U$ is intersecting, and $\mu_{\bm p}(U)=p^3+3p^2(1-p)>p$. 
\end{example}

\section{An SDP problem for a general cross-intersection problem}
\label{sec: general problem}

We restate our problem of bounding the measures of cross-independent pairs in a bipartite graph as an SDP problem.
See, e.g., \cite{Todd2001AN,GM2012B} for more about SDP in general.

We list some notation we will use.
Let $\Omega_1,\Omega_2$ be non-empty finite sets, and let $\widehat{\Omega}:=\Omega_1\sqcup\Omega_2$.
Let $G$ be a (not necessarily biregular) bipartite graph with bipartition $V(G)=\widehat{\Omega}=\Omega_1\sqcup\Omega_2$.
For two vertices $x,y\in \widehat{\Omega}$, we write $x\sim y$ if and only if $x,y$ are adjacent.
For $i=1,2$, let $\mu_i$ be a probability measure on $\Omega_i$.

Let $\mathbb{R}^{\widehat{\Omega}\times \widehat{\Omega}}$ be the set of real matrices with rows and columns indexed by $\widehat{\Omega}$, and let $\mathbb{R}^{\widehat{\Omega}}$ be the set of real column vectors with coordinates indexed by $\widehat{\Omega}$.
The sets $\mathbb{R}^{\Omega_i\times\Omega_j}$ and $\mathbb{R}^{\Omega_i}$ are similarly defined.
Let $J_{i,j}\in\mathbb{R}^{\Omega_i\times\Omega_j}$ be the all ones matrix.
For $x\in\Omega_i,y\in\Omega_j$, let $E_{x,y}\in\mathbb{R}^{\Omega_i\times\Omega_j}$ be the matrix with a $1$ in the $(x,y)$-entry and $0$ elsewhere.
Let $\Delta_i\in \mathbb{R}^{\Omega_i\times\Omega_i}$ be the diagonal matrix whose $(x,x)$-entry is $\mu_i(\{x\})$ for $x\in \Omega_i$.
Let $S\mathbb{R}^{\widehat{\Omega}\times \widehat{\Omega}}$ be the set of symmetric matrices in $\mathbb{R}^{\widehat{\Omega}\times \widehat{\Omega}}$.
We write $Y \bullet Z := \mathrm{trace} (Y^{\mathsf{T}} Z)$ for $Y,Z\in \mathbb{R}^{\widehat{\Omega}\times \widehat{\Omega}}$.

Now, suppose that $U_1\subset\Omega_1,U_2\subset\Omega_2$ are cross-independent in $G$.
We are looking for an upper bound on $\mu_1(U_1)\mu_2(U_2)$, so that we may assume that $\mu_1(U_1)>0$ and $\mu_2(U_2)>0$ without loss of generality.
Let $\bm{x}_i\in\mathbb{R}^{\Omega_i}$ be the characteristic vector of $U_i$, and let
\begin{equation*}
	X=X_{U_1,U_2}:=\begin{bmatrix} \frac{1}{\sqrt{\mu_1(U_1)}}\bm{x}_1 \\ \frac{1}{\sqrt{\mu_2(U_2)}}\bm{x}_2 \end{bmatrix} \!\! \begin{bmatrix} \frac{1}{\sqrt{\mu_1(U_1)}}\bm{x}_1 \\ \frac{1}{\sqrt{\mu_2(U_2)}}\bm{x}_2 \end{bmatrix}^{\!\mathsf{T}}\in S\mathbb{R}^{\widehat{\Omega}\times\widehat{\Omega}}.
\end{equation*}
Note that $X$ is a feasible solution to the following SDP problem with objective value $\sqrt{\mu_1(U_1)\mu_2(U_2)}$:
\begin{equation*}
\begin{array}{lll}
	\text{(P):} & \text{maximize} & \begin{bmatrix} 0 & \frac{1}{2}\Delta_1J_{1,2}\Delta_2 \\ \frac{1}{2}\Delta_2J_{2,1}\Delta_1 & 0\end{bmatrix} \bullet X \\[0.12in]
		& \text{subject to} & \begin{bmatrix} \Delta_1 & 0 \\ 0 & 0 \end{bmatrix} \bullet X = \begin{bmatrix} 0 & 0 \\ 0 & \Delta_2 \end{bmatrix} \bullet X = 1, \\[.12in]
		&& \begin{bmatrix} 0 & E_{x,y} \\ E_{y,x} & 0 \end{bmatrix} \bullet X = 0 \ \ \text{for} \ \ x\in\Omega_1,\, y\in\Omega_2, \, x\sim y, \\
		&& X \succcurlyeq 0, \ X \geqslant 0,
\end{array}
\end{equation*}
where $X \in S\mathbb{R}^{\widehat{\Omega} \times \widehat{\Omega}}$ is the variable, and $X \succcurlyeq 0$ (resp.~$X \geqslant 0$) means that $X$ is positive semidefinite (resp.~nonnegative).
The dual problem is then given by
\begin{equation*}
\begin{array}{lll}
	\text{(D):} & \text{minimize} & \alpha + \beta \\
		& \text{subject to} & S:=\begin{bmatrix} \alpha \Delta_1 & -\frac{1}{2} \Delta_1 J_{1,2} \Delta_2 \\  -\frac{1}{2} \Delta_2 J_{2,1} \Delta_1 & \beta \Delta_2 \end{bmatrix} +{\displaystyle\sum_{x\sim y}}\,\gamma_{x,y} \begin{bmatrix} 0 & E_{x,y} \\ E_{y,x} & 0 \end{bmatrix} - Z \succcurlyeq 0, \\
		&& Z \geqslant 0,
\end{array}
\end{equation*}
where $\alpha,\beta,\gamma_{x,y}\in\mathbb{R}$, and $Z\in S\mathbb{R}^{\widehat{\Omega}\times \widehat{\Omega}}$ are the variables, and the sum is over $x\in\Omega_1$, $y\in\Omega_2$ with $x\sim y$.
Indeed, for any feasible solutions to (P) and (D), we have
\begin{align*}
	\alpha+\beta - \begin{bmatrix} 0 & \frac{1}{2}\Delta_1J_{1,2}\Delta_2 \\ \frac{1}{2}\Delta_2J_{2,1}\Delta_1 & 0\end{bmatrix} \bullet X &= \begin{bmatrix} \alpha \Delta_1 & -\frac{1}{2}\Delta_1J_{1,2}\Delta_2 \\ -\frac{1}{2}\Delta_2J_{2,1}\Delta_1 & \beta \Delta_2 \end{bmatrix} \bullet X \\
	&\geqslant \left( {\displaystyle-\sum_{x\sim y}}\,\gamma_{x,y} \begin{bmatrix} 0 & E_{x,y} \\ E_{y,x} & 0 \end{bmatrix} + Z \right)\! \bullet X \\
	&= Z \bullet X \\
	&\geqslant 0.
\end{align*}
In particular, $\alpha+\beta$ is an upper bound on $\sqrt{\mu_1(U_1)\mu_2(U_2)}$.
We also note that feasible solutions to (P) and (D) are \emph{both} optimal if and only if $S\bullet X=Z\bullet X=0$.
(By \cite[Theorem 4.1]{Todd2001AN}, there is no duality gap in this case.)
In summary, we have the following:

\begin{thm}\label{sdp-thm}
Let $G$ be a bipartite graph with bipartition $V(G)=\widehat{\Omega}=\Omega_1\sqcup\Omega_2$, and let $\mu_i$ be a probability measure on $\Omega_i$ for $i=1,2$. 
Suppose that $U_1\subset\Omega_1,U_2\subset\Omega_2$ are cross-independent in $G$.
If $(\alpha,\beta,\gamma_{x,y},Z)$ is feasible in (D), then
\begin{equation*}
	\mu_1(U_1) \mu_2(U_2)\leqslant (\alpha+\beta)^2.
\end{equation*}
\end{thm}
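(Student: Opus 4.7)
The plan is to prove this as a straightforward weak-duality statement; most of the computation is in fact already laid out in the excerpt just above the theorem, so my task is to fill in the primal-feasibility check and then invoke that chain of inequalities.

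First, if $\mu_1(U_1)\mu_2(U_2) = 0$ the inequality $0 \leqslant (\alpha+\beta)^2$ is automatic, so I may assume $\mu_i(U_i) > 0$ for $i=1,2$. I would then verify that the matrix $X := X_{U_1, U_2}$ defined in the excerpt is feasible for (P). The two diagonal equality constraints reduce to $\bm{x}_i^{\mathsf{T}} \Delta_i \bm{x}_i / \mu_i(U_i) = 1$, which holds by the definition of $\Delta_i$ and the characteristic vector $\bm{x}_i$. The off-diagonal constraints for $x \sim y$ reduce to $\bm{x}_1(x)\bm{x}_2(y) = 0$, which is exactly the cross-independence hypothesis on $U_1,U_2$. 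Finally $X \succcurlyeq 0$ and $X \geqslant 0$ are automatic because $X$ is an outer product of a single nonnegative vector with itself.

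Second, I compute the objective value at $X$. Using the block form and the identity $\bm{x}_1^{\mathsf{T}} \Delta_1 J_{1,2} \Delta_2 \bm{x}_2 = \mu_1(U_1)\mu_2(U_2)$, this works out to $\sqrt{\mu_1(U_1)\mu_2(U_2)}$. The chain of inequalities displayed in the excerpt now applies to any dual-feasible $(\alpha,\beta,\gamma_{x,y},Z)$: the opening equality uses the two primal diagonal constraints to absorb $\alpha,\beta$; the first $\geqslant$ uses $S \bullet X \geqslant 0$ (which follows from $S \succcurlyeq 0$ together with $X \succcurlyeq 0$); the intermediate equality uses the off-diagonal primal constraints to kill the $\gamma_{x,y}$ terms; and the final $\geqslant$ uses entrywise nonnegativity of $Z$ and $X$. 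The conclusion is $\alpha+\beta \geqslant \sqrt{\mu_1(U_1)\mu_2(U_2)} \geqslant 0$, whence squaring yields $(\alpha+\beta)^2 \geqslant \mu_1(U_1)\mu_2(U_2)$.

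There is no serious obstacle; the only mildly delicate point is the bookkeeping with the $\tfrac{1}{2}$ factors appearing in the off-diagonal blocks of the objective matrix and of $S$, but these cancel correctly since $X$ is symmetric and both off-diagonal blocks contribute equally to the trace inner product $Y \bullet X$.
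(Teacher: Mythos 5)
Your proof is correct and follows the same weak-duality argument the paper itself uses: verify that $X_{U_1,U_2}$ is primal-feasible with objective value $\sqrt{\mu_1(U_1)\mu_2(U_2)}$, then invoke the displayed chain of inequalities to conclude $\alpha+\beta \geqslant \sqrt{\mu_1(U_1)\mu_2(U_2)}$ and square. The only difference is that you spell out the feasibility checks and the trivial case $\mu_1(U_1)\mu_2(U_2)=0$ slightly more explicitly than the paper, which leaves them implicit.
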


\section{Proof of Theorem~\ref{weighted-thm}}
\label{sec: proof}

Let $q_i^{(\ell)}:=1-p_i^{(\ell)}$ for $i=1,2$, and $\ell\in[n]$.
We also set $q_i:=q_i^{(1)}=1-p_i$.
Throughout the proof, we shall always assume without loss of generality that
\begin{equation*}
	p_1\geqslant p_2.
\end{equation*}

The proof of Theorem~\ref{weighted-thm} consists of four parts.
First, we prove the bound \eqref{bound} when $p_1\leqslant 1/2$, based on the SDP method developed in Section~\ref{sec: general problem}.
Second, we exploit the duality of SDP further to prove a lemma which enables us to reduce the proof of Theorem~\ref{weighted-thm} when $p_1\leqslant 1/2$ to the case where $w=[n]$.
Third, we complete the proof of  the theorem when $p_1\leqslant 1/2$.
Finally, we prove Theorem~\ref{weighted-thm} when $p_1>1/2$.

\subsection{Proof of the bound \eqref{bound} when $p_1\leqslant 1/2$}
\label{sec: proof of bound when p_1 at most 1/2}

Recall that the bipartite disjointness graph $G$ has bipartition $V(G)=\Omega_1\sqcup\Omega_2$ with $\Omega_i=2^{[n]}$ ($i=1,2$), and $x\in\Omega_1$, $y\in\Omega_2$ are adjacent if and only if $x\cap y=\emptyset$.
We apply Theorem~\ref{sdp-thm} to this graph with $\alpha=\beta=\sqrt{p_1p_2}/2$ to prove the following:

\begin{lem}\label{lem:ineq}
If $p_1\leqslant 1/2$, then \eqref{bound} holds.
\end{lem}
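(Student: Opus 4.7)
The strategy is to apply Theorem~\ref{sdp-thm} to the bipartite disjointness graph $G$ with the symmetric choice $\alpha=\beta=\sqrt{p_1p_2}/2$, so that $(\alpha+\beta)^2=p_1p_2$ already matches the asserted bound. The task then reduces to exhibiting explicit dual variables $\gamma_{x,y}$ (for $x\cap y=\emptyset$) and a nonnegative $Z\in S\mathbb{R}^{\widehat\Omega\times\widehat\Omega}$ such that the matrix $S$ from problem (D) is positive semidefinite.

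I would take $Z=0$ and construct $\gamma_{x,y}$ by exploiting the tensor-product structure of $G$. Under the natural bijection $x\leftrightarrow(x\cap\{\ell\})_{\ell=1}^n$, the vertex set $\Omega_i$ factors as the $n$-fold product of $\{\emptyset,\{\ell\}\}$; the diagonal measure matrices decompose as $\Delta_i=\bigotimes_{\ell=1}^n\mathrm{diag}(q_i^{(\ell)},p_i^{(\ell)})$; the all-ones block factors as $J_{1,2}=\bigotimes_\ell J_\ell$; and the adjacency satisfies $\mathbb{1}[x\cap y=\emptyset]=\prod_\ell\mathbb{1}[x_\ell\cap y_\ell=\emptyset]$. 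I would first solve the $n=1$ case by an elementary $2\times 2$ calculation, guided by the complementary-slackness condition $Sv=0$ at the candidate extremizer corresponding to $U_1=U_1^{(1)},\,U_2=U_2^{(1)}$, and then extend coordinatewise to general $n$ through a compatible choice of $\gamma_{x,y}$. Once these weights are in hand, positive semidefiniteness of $S$ reduces via the Schur complement (the diagonal blocks $\alpha\Delta_1,\,\beta\Delta_2$ being positive definite) to the operator inequality
\begin{equation*}
\tfrac{p_1p_2}{4}\,\Delta_2\;\succcurlyeq\;B^{\mathsf T}\Delta_1^{-1}B,
\end{equation*}
where $B$ denotes the off-diagonal block of $S$; equivalently, $\Delta_2^{-1/2}B^{\mathsf T}\Delta_1^{-1/2}$ has operator norm at most $\sqrt{p_1p_2}/2$. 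The hypothesis $p_1\leqslant 1/2$, combined with \eqref{assumption on p} and $p_2\leqslant p_1$, forces $p_i^{(\ell)}\leqslant 1/2\leqslant q_i^{(\ell)}$ for every $\ell$ and for $i=1,2$, and this sign information is what drives the verification.

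The main technical obstacle is the construction of $\gamma_{x,y}$ itself. A naive $n$-fold tensoring of the $n=1$ dual solution cannot succeed, because $(\alpha+\beta)^2=p_1p_2$ depends only on coordinate $1$, whereas a pure tensor product would deliver $\prod_\ell p_1^{(\ell)}p_2^{(\ell)}$. One must instead design $\gamma_{x,y}$ as a more delicate expression — for instance by an inclusion–exclusion over the coordinates $\ell$ with $(x_\ell,y_\ell)\neq(\{\ell\},\{\ell\})$ — so as to respect both the support constraint (disjoint pairs only) and the tensor-product algebra of the measures, and then to verify the Schur-complement inequality by a direct expansion in the product basis. This is precisely the step that will consume the assumption $p_i^{(\ell)}\leqslant 1/2$, used to guarantee that each remainder term has the sign required for the final inequality.
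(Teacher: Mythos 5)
Your framing is right — apply Theorem~\ref{sdp-thm} with $\alpha=\beta=\sqrt{p_1p_2}/2$ and exhibit a dual-feasible $(\gamma,Z)$ — but the proposal has a genuine gap at the point you yourself flag as ``the main technical obstacle.'' First, the choice $Z=0$ does not work for general $n$ once $p_1>p_2$. The paper takes $Z=\begin{bmatrix}\varepsilon_1\Delta_1A_{1,1}&0\\0&\varepsilon_2\Delta_2A_{2,2}\end{bmatrix}$, and in the final feasible solution (set $\varepsilon_2=0$) one still has $\varepsilon_1=\tfrac{(p_1-p_2)p_2}{2\sqrt{p_1p_2}}>0$, so $Z\neq 0$ whenever $p_1>p_2$. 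Moreover, the nonnegativity constraint $Z\geqslant 0$ is precisely where the hypothesis $p_1\leqslant 1/2$ enters: the matrix $A_{1,1}$ has entries $1-p_1^{(\ell)}/q_1^{(\ell)}$, which are nonnegative exactly because $p_1^{(\ell)}\leqslant p_1\leqslant 1/2$. With $Z=0$ that mechanism — and hence the use of the hypothesis — disappears, and the Schur-complement inequality you state need not hold (your $n=1$ verification does succeed with $Z=0$, but it does not tensorize, as you note).

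Second, the fix you sketch — an inclusion–exclusion design of $\gamma_{x,y}$ — is left unconstructed, and it is not the route the paper takes. The paper's resolution (following Friedgut) is to replace the raw adjacency structure by the pseudo-adjacency matrices $A_{i,j}=\bigotimes_\ell A_{i,j}^{(\ell)}$ with $A_{i,j}^{(\ell)}=\bigl[\begin{smallmatrix}1-p_j^{(\ell)}/q_i^{(\ell)}&p_j^{(\ell)}/q_i^{(\ell)}\\1&0\end{smallmatrix}\bigr]$, to set $\sum_{x\sim y}\gamma_{x,y}E_{x,y}=\eta\,\Delta_1A_{1,2}$, and crucially to use the change of basis $V_i=\bigotimes_\ell V_i^{(\ell)}$ that simultaneously diagonalizes $\Delta_iA_{i,j}$ against $\Delta_i$. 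Under this conjugation, $\Delta_1J_{1,2}\Delta_2$ collapses to $E_{\emptyset,\emptyset}$, so the rank-one ``cost'' of the all-ones block lands entirely in the $z=\emptyset$ coordinate, and one only needs $2\times 2$ determinant inequalities for each $z\in 2^{[n]}$. It is this change of basis — not a Schur complement on the raw blocks — that circumvents the tensorization mismatch you identify. Your proposal correctly diagnoses the problem but does not supply the construction that solves it, and its concrete commitment to $Z=0$ is incorrect.
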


The rest of this subsection is devoted to the proof of Lemma~\ref{lem:ineq}, so we assume from now on that $p_1\leqslant 1/2$.
We will find appropriate $\gamma_{x,y}$ and $Z$ in two steps: the case $n=1$ and the general case $n\geqslant 2$.
It is somewhat surprising that the initial case is more essential, and the general case follows easily from a tensor product construction.

We start with the case $n=1$. 
In this case, we can quickly verify that the following is a feasible solution to (D) with objective value $\sqrt{p_1p_2}$:
\begin{equation*}
	\alpha=\beta=\frac{\sqrt{p_1p_2}}{2}, \quad \gamma_{\emptyset,\emptyset}=\frac{q_1q_2}{2}, \ \gamma_{\emptyset,\{1\}}=\frac{q_1p_2}{2}, \ \gamma_{\{1\},\emptyset}=\frac{p_1q_2}{2}, \quad Z=0.
\end{equation*}
This solution is obtained by simply letting the positive semidefinite matrix $S$ in (D) have as less nonzero entries as possible.
However, in order to construct feasible solutions for general $n$, we incorporate an idea of Friedgut \cite{Friedgut2008C}.

Let $c_i:=\sqrt{{p_i}/{q_i}}$, and define
\begin{equation*}
	A_{i,j}:= \begin{bmatrix} 1-\frac{p_j}{q_i} & \frac{p_j}{q_i} \\ 1 & 0 \end{bmatrix}, \ D_{i,j}:=\begin{bmatrix} 1 & 0 \\ 0 & -c_ic_j \end{bmatrix}, \ V_i:=\begin{bmatrix} 1 & c_i \\ 1 & -\frac{1}{c_i} \end{bmatrix}, \ \Delta_i:=\begin{bmatrix} q_i & 0 \\ 0 & p_i \end{bmatrix},
\end{equation*}
where the rows and columns are indexed in the order $\emptyset$, $\{1\}$.
Then, it follows that
\begin{equation}\label{AV=VD}
	A_{i,j}V_j=V_iD_{i,j}, \quad V_i^{\mathsf{T}}\Delta_iV_i=I \ \ \text{(the identity matrix)}, 
\quad (\Delta_iA_{i,j})^{\mathsf{T}}=\Delta_jA_{j,i},
\end{equation}
from which it follows that
\begin{equation}\label{VDelAV=D}
	V_i^{\mathsf{T}} (\Delta_iA_{i,j}) V_j = D_{i,j}.
\end{equation}
Note also that
\begin{equation}\label{VDelJDelV}
	V_i^{\mathsf{T}} (\Delta_i J_{i,j} \Delta_j) V_j = E_{\emptyset,\emptyset}.
\end{equation}

Now, we consider a symmetric matrix of the form
\begin{equation}\label{S}
	S=\begin{bmatrix} \frac{\sqrt{p_1p_2}}{2} \Delta_1-\varepsilon_1\Delta_1A_{1,1} & \eta\Delta_1A_{1,2} -\frac{1}{2} \Delta_1 J_{1,2} \Delta_2 \\ \eta\Delta_2A_{2,1} -\frac{1}{2} \Delta_2 J_{2,1} \Delta_1 & \frac{\sqrt{p_1p_2}}{2} \Delta_2 -\varepsilon_2\Delta_2A_{2,2}\end{bmatrix},
\end{equation}
where $\varepsilon_1,\varepsilon_2,\eta\in\mathbb{R}$ and $\varepsilon_1,\varepsilon_2\geqslant 0$.
In other words, we take
\begin{equation}\label{values}
	\alpha=\beta=\frac{\sqrt{p_1p_2}}{2}, \quad \sum_{x\sim y}\gamma_{x,y} E_{x,y}=\eta\Delta_1A_{1,2}, \quad Z=\begin{bmatrix} \varepsilon_1\Delta_1A_{1,1} & 0 \\ 0 & \varepsilon_2\Delta_2A_{2,2} \end{bmatrix}.
\end{equation}
Since $p_1\leqslant 1/2$ and $\varepsilon_1,\varepsilon_2\geqslant 0$, we have $Z\geqslant 0$.
Moreover, by \eqref{AV=VD}, \eqref{VDelAV=D}, and \eqref{VDelJDelV},
\begin{equation*}
	\begin{bmatrix} V_1^{\mathsf{T}} & 0 \\ 0 & V_2^{\mathsf{T}} \end{bmatrix}S \begin{bmatrix} V_1 & 0 \\ 0 & V_2\end{bmatrix}=\begin{bmatrix} \frac{\sqrt{p_1p_2}}{2}-\varepsilon_1 & 0 & \eta-\frac{1}{2} & 0 \\ 0 & \frac{\sqrt{p_1p_2}}{2}+\varepsilon_1 c_1^2 & 0 & -\eta c_1c_2 \\ \eta-\frac{1}{2} & 0 & \frac{\sqrt{p_1p_2}}{2}-\varepsilon_2 & 0 \\ 0 & -\eta c_1c_2 & 0 & \frac{\sqrt{p_1p_2}}{2}+\varepsilon_2c_2^2 \end{bmatrix}.
\end{equation*}
Thus, $S\succcurlyeq 0$ if and only if
\begin{equation*}
	\begin{bmatrix} \frac{\sqrt{p_1p_2}}{2}-\varepsilon_1 & \eta-\frac{1}{2} \\ \eta-\frac{1}{2} & \frac{\sqrt{p_1p_2}}{2}-\varepsilon_2 \end{bmatrix} \succcurlyeq 0, \quad \begin{bmatrix} \frac{\sqrt{p_1p_2}}{2}+\varepsilon_1 c_1^2 & -\eta c_1c_2 \\ -\eta c_1c_2 & \frac{\sqrt{p_1p_2}}{2}+\varepsilon_2 c_2^2 \end{bmatrix} \succcurlyeq 0,
\end{equation*}
if and only if
\begin{gather}
	0\leqslant\varepsilon_1,\varepsilon_2\leqslant \frac{\sqrt{p_1p_2}}{2}, \label{psd0} \\
	\left(\frac{\sqrt{p_1p_2}}{2}-\varepsilon_1 \right) \!\! \left(\frac{\sqrt{p_1p_2}}{2}-\varepsilon_2 \right) \geqslant \left(\eta-\frac{1}{2} \right)^2, \label{psd1} \\
	\left(\frac{\sqrt{p_1p_2}}{2}+\varepsilon_1 c_1^2 \right) \!\! \left( \frac{\sqrt{p_1p_2}}{2}+\varepsilon_2 c_2^2 \right) \geqslant \eta^2 c_1^2c_2^2. \label{psd2}
\end{gather}
Observe that the following form a one-parameter family of solutions to \eqref{psd0}, \eqref{psd1}, and \eqref{psd2}, which therefore provide feasible solutions to (D) with objective value $\sqrt{p_1p_2}$\,:
\begin{equation}\label{epsilon-eta}
	\varepsilon_1=\frac{p_2}{p_1}\varepsilon_2+\frac{(p_1-p_2)p_2}{2\sqrt{p_1p_2}}, \quad \eta=\frac{p_2}{\sqrt{p_1p_2}}\varepsilon_2+\frac{q_2}{2}, \quad 0\leqslant\varepsilon_2\leqslant\frac{\sqrt{p_1p_2}}{2}.
\end{equation}
(Recall that we are assuming that $p_1\geqslant p_2$.)
This proves the bound \eqref{bound} for the (trivial) case $n=1$.
We note that, for the solutions \eqref{epsilon-eta}, equality is attained in each of \eqref{psd1} and \eqref{psd2}.
We also note that just one solution from \eqref{epsilon-eta} is enough to prove \eqref{bound}.
However, we will make full use of the one-parameter family of solutions to determine the extremal structure.

Next, we consider the general case $n\geqslant 2$.
Let $c_i^{(\ell)}:=\sqrt{{p_i^{(\ell)}}/{q_i^{(\ell)}}}$, and let
\begin{equation*}
	A_{i,j}^{(\ell)}:=\begin{bmatrix} 1-\frac{p_j^{(\ell)}}{q_i^{(\ell)}} & \frac{p_j^{(\ell)}}{q_i^{(\ell)}} \\ 1 & 0 \end{bmatrix},
\end{equation*}
where the rows and columns are indexed in the order $\emptyset$, $\{\ell\}$.
We define
\begin{equation*}
	A_{i,j}=A_{i,j}^{[n]}:=A_{i,j}^{(n)}\otimes A_{i,j}^{(n-1)}\otimes\cdots\otimes A_{i,j}^{(1)}.
\end{equation*}
We naturally identify $2^{\{1\}}\times\dots\times 2^{\{n\}}$ with $2^{[n]}$, and thus we view the rows (resp.~columns) of $A_{i,j}$ as indexed by $\Omega_i$ (resp.~$\Omega_j$).
We also define $D_{i,j}=D_{i,j}^{[n]}$, $V_i=V_i^{[n]}$, and $\Delta_i=\Delta_i^{[n]}$ in the same manner.
Note in particular that $\Delta_i$ is the diagonal matrix whose $(x,x)$-entry is $\mu_i(\{x\})$ for $x\in\Omega_i$.
With these new matrices we have \eqref{AV=VD}, \eqref{VDelAV=D}, and \eqref{VDelJDelV}.
Observe also that the $(x,y)$-entry of $A_{i,j}$ is $0$ if $x\cap y\neq\emptyset$ ($x\in\Omega_i,y\in\Omega_j$).
Moreover, if $p_1<1/2$ then the $(x,y)$-entry is positive whenever $x\cap y=\emptyset$.

Again, we consider the symmetric matrix $S$ defined by \eqref{S} where $\varepsilon_1,\varepsilon_2$, and $\eta$ are given by \eqref{epsilon-eta}.
Namely, we choose the variables by \eqref{values} with the new matrices.
Then,
\begin{align*}
	\begin{bmatrix} V_1^\mathsf{T} & 0 \\ 0 & V_2^\mathsf{T} \end{bmatrix} S \begin{bmatrix} V_1 & 0 \\ 0 & V_2 \end{bmatrix} &= \begin{bmatrix} \frac{\sqrt{p_1p_2}}{2} I_1 -\varepsilon_1 D_{1,1} & \eta D_{1,2}-\frac{1}{2} E_{\emptyset,\emptyset} \\ \eta D_{2,1}-\frac{1}{2} E_{\emptyset,\emptyset} & \frac{\sqrt{p_1p_2}}{2} I_2 -\varepsilon_2 D_{2,2} \end{bmatrix} \cong \bigoplus_{z\in 2^{[n]}} S^{(z)},
\end{align*}
where $I_i\in\mathbb{R}^{\Omega_i\times\Omega_i}$ denotes the identity matrix, and we define
\begin{equation*}
	S^{(\emptyset)}:=\begin{bmatrix} \frac{\sqrt{p_1p_2}}{2} -\varepsilon_1 & \eta-\frac{1}{2} \\ \eta-\frac{1}{2} & \frac{\sqrt{p_1p_2}}{2} -\varepsilon_2 \end{bmatrix}, \quad S^{(z)}:=\begin{bmatrix} \frac{\sqrt{p_1p_2}}{2} -\varepsilon_1 c_{1,1}^{(z)} & \eta c_{1,2}^{(z)} \\ \eta c_{2,1}^{(z)}& \frac{\sqrt{p_1p_2}}{2} -\varepsilon_2 c_{2,2}^{(z)} \end{bmatrix} \ (z\ne \emptyset),
\end{equation*}
where we are using the following notation:
\begin{equation*}
	c_{i,j}^{(z)}:=\prod_{\ell\in z}\big(-c_i^{(\ell)}c_j^{(\ell)}\big).
\end{equation*}
We have already verified that $S^{(\emptyset)}, S^{(\{1\})}\succcurlyeq 0$, so that we now consider $S^{(z)}$ with $z\ne\emptyset,\{1\}$.
By \eqref{psd0}, and since $\bigl|c_{1,1}^{(z)}\bigr|,\bigl|c_{2,2}^{(z)}\bigr|\leqslant 1$, the diagonal entries of $S^{(z)}$ are nonnegative.
Thus, $S^{(z)}\succcurlyeq 0$ if and only if
\begin{equation}\label{det:general}
	\left(\frac{\sqrt{p_1p_2}}2 -\varepsilon_1 c_{1,1}^{(z)} \right) \!\! \left( \frac{\sqrt{p_1p_2}}2-\varepsilon_2 c_{2,2}^{(z)} \right)  \geqslant \big(\eta c_{1,2}^{(z)}\big)^2.
\end{equation}
To prove the bound \eqref{bound}, we set $\varepsilon_2=0$.
In this case, using \eqref{epsilon-eta} with $\varepsilon_2=0$, together with $c_{1,1}^{(z)}=(-1)^{|z|}\bigl|c_{1,1}^{(z)}\bigr|$, $(c_{1,2}^{(z)})^2=\bigl|c_{1,1}^{(z)}\bigr|\bigl|c_{2,2}^{(z)}\bigr|$, we rewrite \eqref{det:general} as follows:
\begin{equation*}
	p_1p_2\geqslant \bigl|c_{1,1}^{(z)}\bigr| \left( q_2^2\,\bigl|c_{2,2}^{(z)}\bigr| +(-1)^{|z|} (p_1-p_2)p_2 \right).
\end{equation*}
We note that $p/(1-p)$ is increasing in $p\in(0,1/2]$, so that $p_i^{(\ell)}/q_i^{(\ell)}\leqslant p_i/q_i\leqslant 1$ for all $\ell\in[n]$ and $i=1,2$.
Thus, if $|z|$ is odd then
\begin{equation}\label{det:odd}
	\bigl|c_{1,1}^{(z)}\bigr| \left( q_2^2\,\bigl|c_{2,2}^{(z)}\bigr| -(p_1-p_2)p_2 \right) \leqslant \frac{p_1}{q_1} \left( q_2^2 \cdot \frac{p_2}{q_2} -(p_1-p_2)p_2 \right) = p_1p_2,
\end{equation}
and if $|z|$ is even then
\begin{equation}\label{det:even}
	\bigl|c_{1,1}^{(z)}\bigr| \left( q_2^2\,\bigl|c_{2,2}^{(z)}\bigr| +(p_1-p_2)p_2 \right) \leqslant \frac{p_1^2}{q_1^2} \left( q_2^2 \cdot\frac{p_2^2}{q_2^2} +(p_1-p_2)p_2 \right) = \frac{p_1^3p_2}{q_1^2} \leqslant p_1p_2.
\end{equation}
It follows that $S\succcurlyeq 0$, so that \eqref{values} (where $\varepsilon_2=0$ and $\varepsilon_1,\eta$ are given by \eqref{epsilon-eta}) provides a feasible solution to (D) with objective value $\sqrt{p_1p_2}$.
The proof of Lemma~\ref{lem:ineq} is complete.

\subsection{Reduction to kernels when $p_1\leqslant 1/2$}

For $i=1,2$, $E\subset\Omega_i$, and $z\in 2^{[n]}$, let $E|_z:=\{x\cap z\in\Omega_i:x\in E\}$.
Note that $\Omega_i|_z$ is a copy of $2^{z}$ in $\Omega_i$.
Recall $w=w_{\bm{p}_1,\bm{p}_2}$ from \eqref{w}.
This subsection is devoted to the proof of the following result:

\begin{lem}\label{kernel}
Suppose that $\mu_1(U_1)\mu_2(U_2)=p_1p_2$.
If $p_1\leqslant 1/2$, then
\begin{equation}\label{kernel-structure}
	U_i=\{x\sqcup y:x\in U_i|_w,\, y\in \Omega_i|_{[n]\backslash w}\}=U_i|_w\times \Omega_i|_{[n]\backslash w} \qquad (i=1,2).
\end{equation}
(Here, we identify $\Omega_i|_w \times \Omega_i|_{[n]\backslash w}$ with $\Omega_i$.)
In particular, $U_1|_w,U_2|_w$ are cross-intersecting.
Moreover, if $p_1=p_2=1/2$, then $\bigl|U_1|_w\bigr|=\bigl|U_2|_w\bigr|=2^{|w|-1}$.
\end{lem}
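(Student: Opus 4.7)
The plan is to exploit SDP duality, using the one-parameter family of dual feasible solutions \eqref{values}--\eqref{epsilon-eta} constructed in Subsection~\ref{sec: proof of bound when p_1 at most 1/2}. Since $\mu_1(U_1)\mu_2(U_2) = p_1 p_2$, the matrix $X = X_{U_1, U_2}$ is primal feasible with objective value $\sqrt{p_1 p_2}$, matching the dual objective $\alpha + \beta = \sqrt{p_1 p_2}$ of each solution in the family. Hence both are optimal, and by the complementary slackness remark preceding Theorem~\ref{sdp-thm}, $S \bullet X = Z \bullet X = 0$ for every admissible $\varepsilon_2$.

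Writing $X = vv^\mathsf{T}$ with
\begin{equation*}
v = \begin{bmatrix} \bm{x}_1/\sqrt{\mu_1(U_1)} \\ \bm{x}_2/\sqrt{\mu_2(U_2)} \end{bmatrix},
\end{equation*}
the identity $v^\mathsf{T} S v = 0$ and $S \succcurlyeq 0$ together force $Sv = 0$. Set $T := \mathrm{diag}(V_1, V_2)$; since $V_i^\mathsf{T} \Delta_i V_i = I$, we have $T^{-1} = \mathrm{diag}(V_1^\mathsf{T} \Delta_1, V_2^\mathsf{T} \Delta_2)$, and the calculations of Subsection~\ref{sec: proof of bound when p_1 at most 1/2} show that $T^\mathsf{T} S T$ decomposes, up to a permutation of rows and columns, into a direct sum of $2 \times 2$ PSD blocks $S^{(z)}$ indexed by $z \in 2^{[n]}$. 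Thus $\hat v := T^{-1} v$ satisfies $S^{(z)} \hat v^{(z)} = 0$ for every $z$, where $\hat v^{(z)} \in \mathbb{R}^2$ is the corresponding sub-block, with $i$-th coordinate $(V_i^\mathsf{T} \Delta_i \bm{x}_i)_z / \sqrt{\mu_i(U_i)}$ for $i = 1, 2$; in particular, $\hat v^{(z)} = 0$ whenever $\det S^{(z)} > 0$.

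Fix $\varepsilon_2 = 0$. Since $(V_i)_{x, \emptyset} = 1$, one computes $\hat v^{(\emptyset)} = (\sqrt{\mu_1(U_1)}, \sqrt{\mu_2(U_2)})^\mathsf{T}$; a short computation shows that $S^{(\emptyset)}$ has rank $1$ with kernel spanned by $(\sqrt{p_1}, \sqrt{p_2})^\mathsf{T}$. Hence $\mu_1(U_1)/p_1 = \mu_2(U_2)/p_2$, and together with $\mu_1(U_1)\mu_2(U_2) = p_1 p_2$ this forces $\mu_i(U_i) = p_i$. The remaining---and main---technical step is to show that $\det S^{(z)} > 0$ for every $z \not\subset w$. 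The idea is to revisit the inequalities \eqref{det:odd} and \eqref{det:even} and track their equality conditions: $|c_{i,i}^{(z)}|$ attains its bound $(p_i/q_i)^{|z|}$ only when $p_i^{(\ell)} = p_i$ for every $\ell \in z$, and the subsequent simplifications are tight only when $|z|$ attains the relevant boundary value or $p_i = 1/2$. A case analysis on the parity of $|z|$ and on whether $p_i = 1/2$ then confirms that, under $p_1 \leqslant 1/2$, $\det S^{(z)} = 0$ implies $z \subset w$; executing this case-checking carefully is the main obstacle.

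With the support of $\hat v$ confined to $\{z : z \subset w\}$, the product structure follows. Indeed, $(V_i)_{x, z} = \prod_{\ell \in z \setminus x} c_i^{(\ell)} \prod_{\ell \in z \cap x}(-1/c_i^{(\ell)})$ depends only on $x \cap w$ whenever $z \subset w$, and thus so does $(\bm{x}_i)_x$ (as a linear combination of such entries over $z \subset w$); this is precisely \eqref{kernel-structure}. The cross-intersecting property of $U_1|_w, U_2|_w$ is then immediate, since any element of $U_i|_w$, viewed as a subset of $[n]$ contained in $w$, lies in $U_i$ by the product form. Finally, when $p_1 = p_2 = 1/2$, every $p_i^{(\ell)}$ with $\ell \in w$ equals $1/2$, so the induced product measure on $2^w$ is uniform and $\mu_i(U_i) = |U_i|_w|/2^{|w|}$; combined with $\mu_i(U_i) = 1/2$, this yields $|U_i|_w| = 2^{|w|-1}$.
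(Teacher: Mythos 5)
Your proposal follows essentially the same route as the paper: complementary slackness forces $S\bullet X=0$, hence $Sv=0$ since $S\succcurlyeq 0$; the block decomposition via $V_1,V_2$ localizes this to $S^{(z)}\hat v^{(z)}=0$ for every $z$; and positive definiteness of $S^{(z)}$ for $z\not\in 2^w$ annihilates the corresponding coefficients, from which the product structure \eqref{kernel-structure} and the cross-intersecting property of the kernels follow exactly as you say. Your computation that $\hat v^{(\emptyset)}\propto(\sqrt{\mu_1(U_1)},\sqrt{\mu_2(U_2)})^\mathsf{T}$ lies in the rank-one kernel of $S^{(\emptyset)}$, hence $\mu_i(U_i)=p_i$, is a nice observation that the paper does not make explicit, and it gives a small shortcut in the final part: the paper instead uses the cross-intersecting property to bound $|U_1|_w|\cdot|U_2|_w|\leqslant(2^{|w|-1})^2$ and forces equality via the measure identity. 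Both routes are valid.

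The one genuine gap is the step you yourself flag: showing $\det S^{(z)}>0$ whenever $z\not\subset w$. You identify the claim correctly but never carry out the verification, and your sketch is aimed at the wrong target. You suggest a case analysis that tracks when the \emph{subsequent} inequalities (relating $(p_i/q_i)^{|z|}$ to $p_i/q_i$ or $(p_i/q_i)^2$, and then $p_1^3p_2/q_1^2\leqslant p_1p_2$) become tight, depending on $|z|$ and on whether $p_i=1/2$. That analysis matters later, in pinning down the extremal families within $2^w$, but it is irrelevant for Lemma~\ref{kernel}. What is actually needed is only the first step of the chain: for $\ell\in z\setminus w$ (which exists since $z\not\subset w$), the definition of $w$ and monotonicity of $t\mapsto t/(1-t)$ give $p_1^{(\ell)}/q_1^{(\ell)}<p_1/q_1$ or $p_2^{(\ell)}/q_2^{(\ell)}<p_2/q_2$, hence $\bigl|c_{1,1}^{(z)}\bigr|<(p_1/q_1)^{|z|}$ or $\bigl|c_{2,2}^{(z)}\bigr|<(p_2/q_2)^{|z|}$, and this one strict gain, fed into \eqref{det:odd} or \eqref{det:even}, already yields strict inequality regardless of parity and regardless of whether $p_1$ equals $1/2$ (the only mild point to check is that the parenthetical factor there is bounded above by the positive quantity $p_2q_1$, so a possibly negative left side causes no trouble). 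In short: the idea is right, but the execution is missing and the proposed case split is not the relevant one.
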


The cross-intersecting families $U_1|_w,U_2|_w$ are called the \emph{kernels} of $U_1,U_2$.
We shall prove the uniqueness of the optimal configuration for $U_1,U_2$ as a consequence of the corresponding result for the kernels; see Subsection~\ref{subsec:1/2>=p_1}.
For the rest of this subsection, we continue to assume that $p_1\leqslant 1/2$, and retain the notation of Subsection~\ref{sec: proof of bound when p_1 at most 1/2}.

For $i=1,2$, let $\bm{x}_i\in\mathbb{R}^{\Omega_i}$ be the characteristic vector of $U_i$, and let $\bm{v}_i^{(z)}\in\mathbb{R}^{\Omega_i}$ ($z\in 2^{[n]}$) be the column vectors of $V_i$.
The $\bm{v}_i^{(z)}$ form a basis of $\mathbb{R}^{\Omega_i}$, so that we can write
\begin{equation}\label{characteristic vector}
	\frac{1}{\sqrt{\mu_i(U_i)}}\bm{x}_i=\sum_{z\in 2^{[n]}} \theta_i^{(z)}\bm{v}_i^{(z)}=V_i\,\Theta_i,
\end{equation}
where $\Theta_i:=\bigl[\theta_i^{(z)}\bigr]_{z\in 2^{[n]}}$ denotes the column vector indexed by $2^{[n]}$ whose $z$-entry is $\theta_i^{(z)}$ for $z\in  2^{[n]}$.
Assume from now on that $\mu_1(U_1)\mu_2(U_2)=p_1p_2$, and let $X=X_{U_1,U_2}$ be as in Section~\ref{sec: general problem}.
Consider the symmetric matrix $S$ with $\varepsilon_2=0$, and recall that $S$ corresponds to a feasible solution to (D).
Then, we must have $S\bullet X=0$ (see the comment before Theorem~\ref{sdp-thm}).
Thus, it follows from \eqref{characteristic vector} that
\begin{align}
	0 &= S\bullet X \label{psd_optimal} \\
	&= S\bullet \!\left( \begin{bmatrix} V_1& 0 \\ 0 & V_2 \end{bmatrix} \!\! \begin{bmatrix} \Theta_1 \\ \Theta_2 \end{bmatrix} \!\! \begin{bmatrix} \Theta_1 \\ \Theta_2 \end{bmatrix}^{\!\mathsf{T}} \!\! \begin{bmatrix} V_1 & 0 \\ 0 & V_2 \end{bmatrix}^{\!\mathsf{T}} \right) \notag \\
	&= \left( \begin{bmatrix} V_1& 0 \\ 0 & V_2 \end{bmatrix}^{\!\mathsf{T}} \!\! S \begin{bmatrix} V_1 & 0 \\ 0 & V_2 \end{bmatrix}\right)\! \bullet \! \left(\begin{bmatrix} \Theta_1 \\ \Theta_2 \end{bmatrix} \!\! \begin{bmatrix} \Theta_1 \\ \Theta_2 \end{bmatrix}^{\!\mathsf{T}}\right) \notag \\
	&= \sum_{z\in 2^{[n]}} S^{(z)} \bullet \! \left( \begin{bmatrix} \theta_1^{(z)} \\ \theta_2^{(z)} \end{bmatrix} \!\! \begin{bmatrix} \theta_1^{(z)} \\ \theta_2^{(z)} \end{bmatrix}^{\!\mathsf{T}}\right). \notag
\end{align}
Observe that either $p_1^{(\ell)}/q_1^{(\ell)}<p_1/q_1$ or $p_2^{(\ell)}/q_2^{(\ell)}<p_2/q_2$ if $\ell\not\in w$.
Thus, if $z\not\in 2^w$ then \eqref{det:odd}, \eqref{det:even} are strict inequalities, i.e., $S^{(z)}\succ 0$ (positive definite).
It follows from \eqref{psd_optimal} that $\theta_1^{(z)}=\theta_2^{(z)}=0$ whenever $z\not\in 2^w$.
Next, let $z\in 2^w$.
Note that $c_i^{(\ell)}=c_i:=c_i^{(1)}$ for all $\ell\in w$.
Thus, by definition, the $x$-entry of $\bm{v}_i^{(z)}$ is given by $\prod_{\ell\in z\backslash x} c_i^{(\ell)}\prod_{k\in x\cap z}(-1/c_i^{(k)})=c_i^{|z\backslash x|}(-1/c_i)^{|x\cap z|}$ for $x\in \Omega_i$.
Thus, for $x,y\in\Omega_i$, the $x$-entry and the $y$-entry of $\bm{v}_i^{(z)}$ are identical whenever $x\cap z=y\cap z$. 
By these comments, it follows that, for $x,y\in\Omega_i$, the $x$-entry and the $y$-entry of $\bm{x}_i$ are identical whenever $x\cap w=y\cap w$.
Thus, for $x\in \Omega_i$, we have $x\in U_i$ if and only if $x\cap w\in U_i$.
This proves \eqref{kernel-structure}.
In particular, we have $U_i|_w=U_i\cap (\Omega_i|_w)$, so that $U_1|_w,U_2|_w$ are cross-intersecting.

Finally, suppose that $p_1=p_2=1/2$, and let $E:=\{w\backslash x:x\in U_1|_w\}$, viewed as a subset of $\Omega_2|_w$.
Then, $U_2|_w\cap E=\emptyset$ since $U_1|_w,U_2|_w$ are cross-intersecting, and therefore
\begin{equation*}
	\bigl|U_1|_w\bigr|\cdot \bigl|U_2|_w\bigr| \leqslant \bigl|U_1|_w\bigr| \cdot\bigl|(\Omega_2|_w)\backslash E\bigr|=\bigl|U_1|_w\bigr|\cdot\bigl(2^{|w|}-\bigl|U_1|_w\bigr|\bigr)\leqslant (2^{|w|-1})^2,
\end{equation*}
with equality if and only if $\bigl|U_1|_w\bigr|=\bigl|U_2|_w\bigr|=2^{|w|-1}$.
On the other hand, it follows from \eqref{kernel-structure} that $\mu_i(U_i)=(1/2)^{|w|}\bigl|U_i|_w\bigr|$ for $i=1,2$.
Thus, we have $\bigl|U_1|_w\bigr|\cdot \bigl|U_2|_w\bigr|=(2^{|w|-1})^2$ since $\mu_1(U_1)\mu_2(U_2)=(1/2)^2$, and this completes the proof of Lemma~\ref{kernel}.

\subsection{Proof of Theorem~\ref{weighted-thm} when $p_1\leqslant 1/2$}
\label{subsec:1/2>=p_1}

In this subsection, we prove Theorem~\ref{weighted-thm} under the assumption that $p_1\leqslant 1/2$.
To this end, we use Lemma~\ref{kernel} as follows.
For the rest of this subsection, assume that $\mu_1(U_1)\mu_2(U_2)=p_1p_2$, and recall \eqref{kernel-structure}.
For $i=1,2$, let $\bm{p}'_i:=(p_i^{(\ell)}:\ell\in w)$.
Observe that $\mu_i(U_i)=\mu_{\bm{p}_i'}(U_i|_w)$ for $i=1,2$, so that $U_1|_w,U_2|_w$ (which are cross-intersecting) are optimal with respect to $\mu_{\bm{p}_1'},\mu_{\bm{p}_2'}$.
Thus, we may further assume in the following that
\begin{equation*}
	w=[n],
\end{equation*}
i.e., $p_i^{(1)}=\dots=p_i^{(n)}=p_i$ for each of $i=1,2$.
In particular, we now have $c_{i,j}^{(z)}=(-c_ic_j)^{|z|}$ for $z\in 2^{[n]}$, where we recall $c_i=\sqrt{p_i/q_i}$ for $i=1,2$.

\subsubsection{The case when $p_1=p_2=1/2$}

Here, we also exclude the case where $n=|w|\geqslant 3$ (cf.~Examples~\ref{ex-3} and \ref{ex-4}).
By Lemma~\ref{kernel} (with $w=[n]$), we have $|U_1|=|U_2|=2^{n-1}$.
If $n\leqslant 2$ then this is possible only when $U_1=U_1^{(\ell)},U_2=U_2^{(\ell)}$ for some $\ell\in w$.

\subsubsection{The case when $1/2=p_1>p_2$}\label{case:1/2=p_1>p_2}

We carefully look at the proof of Lemma~\ref{kernel} in more detail (with $w=[n]$).
Recall the matrix $S$ from Subsection~\ref{sec: proof of bound when p_1 at most 1/2} with $\varepsilon_2=0$, and note that $\varepsilon_1>0$ in this case (cf.~\eqref{epsilon-eta}).
If $z\in\binom{[n]}{1}$, then equality holds in \eqref{det:odd} and thus in \eqref{det:general}, so that $S^{(z)}\not\succ 0$ (but $S^{(z)}\succcurlyeq 0$).
Similarly, if $z\in\binom{[n]}{2}$ then equality holds in \eqref{det:even} and thus in \eqref{det:general}.
On the other hand, if $|z|\geqslant 3$, then since $\bigl|c_{2,2}^{(z)}\bigr|=(p_2/q_2)^{|z|}$ and $p_2/q_2<1$, we have $\bigl|c_{2,2}^{(z)}\bigr|<p_2/q_2$ in \eqref{det:odd} if $|z|$ is odd, and $\bigl|c_{2,2}^{(z)}\bigr|<p_2^2/q_2^2$ in \eqref{det:even} if $|z|$ is even, so that \eqref{det:general} is a strict inequality, i.e., $S^{(z)}\succ 0$.
It follows that $S^{(z)}\succ 0$ precisely when $|z|\geqslant 3$.

Now, recall the matrix $X=X_{U_1,U_2}$.
We have $S\bullet X=0$, so that it follows from \eqref{psd_optimal} and the above comments that $\theta_1^{(z)}=\theta_2^{(z)}=0$ in \eqref{characteristic vector} whenever $|z|\geqslant 3$.
In particular, $\bm{x}_1$ is a linear combination of the $\bm{v}_1^{(z)}$ with $|z|\leqslant 2$.
For $z\in2^{[n]}$, let $\bm{y}^{(z)}\in\mathbb{R}^{\Omega_1}$ be the characteristic vector of the family $U_1^{(z)}:=\{x\in\Omega_1:z\subset x\}$.
Observe that the $\bm{y}^{(z)}$ are the column vectors of
\begin{equation*}
	\begin{bmatrix} 1 & 0 \\ 1 & 1 \end{bmatrix}\otimes\dots \otimes\begin{bmatrix} 1 & 0 \\ 1 & 1 \end{bmatrix} \quad (n\ \text{copies}).
\end{equation*}
Moreover, since the $\bm{v}_1^{(z)}$ are the column vectors of
\begin{equation*}
	\begin{bmatrix} 1 & c_1 \\ 1 & -\frac{1}{c_1} \end{bmatrix}\otimes\dots \otimes\begin{bmatrix} 1 & c_1 \\ 1 & -\frac{1}{c_1} \end{bmatrix} \quad (n\ \text{copies})
\end{equation*}
in this case, and since
\begin{equation*}
	\begin{bmatrix} c_1 \\ -\frac{1}{c_1} \end{bmatrix}=c_1 \begin{bmatrix} 1 \\ 1 \end{bmatrix} -\left( c_1+\frac{1}{c_1} \right) \!\begin{bmatrix} 0 \\ 1 \end{bmatrix},
\end{equation*}
it follows that $\bm{v}_1^{(z)}$ is a linear combination of the $\bm{y}^{(z')}$ with $z'\subset z$; more specifically,
\begin{equation*}
	\bm{v}_1^{(z)}=\sum_{z'\subset z}c_1^{|z\backslash z'|}\left(-c_1-\frac{1}{c_1}\right)^{\!\!|z'|}\bm{y}^{(z')} \qquad (z\in 2^{[n]}).
\end{equation*}
(In fact, $c_1=1$ here, but the discussions in this subsection will be used again in later (sub)sections, where we have $c_1<1$.)
Thus, $\bm{x}_1$ can now be written as
\begin{equation}\label{x_1 in terms of y^z}
	\bm{x}_1=\sum_{\substack{z\in 2^{[n]} \\ |z|\leqslant 2 }}\lambda^{(z)} \bm{y}^{(z)}.
\end{equation}

We claim that $U_1$ is intersecting.
Suppose the contrary, and pick $x,y\in U_1$ such that $x\cap y=\emptyset$.
Since $y\subset [n]\backslash x$, it follows from the optimality that $[n]\backslash x\in U_1$.
Moreover, observe that the $(x,[n]\backslash x)$-entry of $A_{1,1}$ is $1$ in this case.
Since $\varepsilon_1>0$, this implies that $Z\bullet X>0$ (cf.~\eqref{values}).
On the other hand, since equality holds in \eqref{bound}, we must have $Z\bullet X=0$ (see the comment before Theorem~\ref{sdp-thm}).
This is a contradiction.
Thus, $U_1$ is intersecting.

Let $\Lambda_j:=\bigl\{z\in\binom{[n]}{j}:\lambda^{(z)}\neq 0\bigr\}$ for $j=1,2$, and recall that the vectors appearing in \eqref{x_1 in terms of y^z} are $0$-$1$ vectors.
Since $U_1$ is intersecting, the $\emptyset$-entry of $\bm{x}_1$ is $0$, so that $\lambda^{(\emptyset)}=0$.
Similarly, by looking at the $\{\ell\}$-entry of $\bm{x}_1$ for $\ell\in[n]$, we find that $|\Lambda_1|\leqslant 1$.
We now have the following four cases:
(i) $|\Lambda_1|=0$, $|\Lambda_2|\geqslant 2$; (ii) $|\Lambda_1|=0$, $|\Lambda_2|=1$; (iii) $|\Lambda_1|=1$, $|\Lambda_2|\geqslant 2$; (iv) $|\Lambda_1|=1$, $|\Lambda_2|\leqslant 1$.
For (i), there are distinct $\ell,\ell',\ell''\in [n]$ such that $\{\ell,\ell'\},\{\ell,\ell''\}\in\Lambda_2$, and we have $\lambda^{(\{\ell,\ell'\})}=\lambda^{(\{\ell,\ell''\})}=1$.
We also have $\lambda^{(\{\ell',\ell''\})}\in\{0,1\}$, but then the $\{\ell,\ell',\ell''\}$-entry of $\bm{x}_1$ is $2$ or $3$, a contradiction.
For (iii), there are distinct $\ell,\ell',\ell''\in [n]$ such that $\Lambda_1=\bigl\{\!\{\ell\}\!\bigr\}$ and $\{\ell,\ell'\},\{\ell,\ell''\}\in\Lambda_2$.
We have $\lambda^{(\{\ell\})}=1$ and $\lambda^{(\{\ell,\ell'\})}=\lambda^{(\{\ell,\ell''\})}=-1$, but then the $\{\ell,\ell',\ell''\}$-entry of $\bm{x}_1$ is $-1$, a contradiction.
For (ii), there are $\ell,\ell'\in[n]$ such that $\Lambda_2=\bigl\{\!\{\ell,\ell'\}\!\bigr\}$, so that $\lambda^{(\{\ell,\ell'\})}=1$, i.e., $U_1=U_1^{(\{\ell,\ell'\})}$, and we have $U_2=\{x\in \Omega_2:x\cap\{\ell,\ell'\}\neq\emptyset\}$ by the optimality.
In this case, since $p_1=1/2$ and $q_2<1$, it follows that
\begin{equation*}
	\mu_1(U_1)\mu_2(U_2)=p_1^2(1-q_2^2)=p_1p_2\cdot p_1(1+q_2)<p_1p_2,
\end{equation*}
a contradiction.
Thus, we are left with (iv).
Let $\Lambda_1=\bigl\{\!\{\ell\}\!\bigr\}$, so that $\lambda^{(\{\ell\})}=1$.
If $|\Lambda_2|=1$, then there is an $\ell'\in[n]$ such that $\Lambda_2=\bigl\{\!\{\ell,\ell'\}\!\bigr\}$ and $\lambda^{(\{\ell,\ell'\})}=-1$, i.e., $U_1=U_1^{(\ell)}\backslash U_1^{(\{\ell,\ell'\})}$, but this is impossible, since $[n]\in U_1$ by the optimality.
Thus, we have $|\Lambda_2|=0$ and therefore $U_1=U_1^{(\ell)}$, from which it follows that $U_2=U_2^{(\ell)}$ as well.

\subsubsection{The case when $1/2>p_1$}

In this case, we consider the matrix $S$ with $\varepsilon_2>0$.
Recall again that we are assuming that $w=[n]$.
If $z\in\binom{[n]}{1}$, then equality holds in \eqref{det:general} for all $0\leqslant \varepsilon_2\leqslant \sqrt{p_1p_2}/2$ (see \eqref{psd2} and the comment after \eqref{epsilon-eta}).
If $|z|\geqslant 2$, then since $p_1/q_1,p_2/q_2<1$, we have $\bigl|c_{1,1}^{(z)}\bigr|<p_1/q_1$ and $\bigl|c_{2,2}^{(z)}\bigr|<p_2/q_2$ in \eqref{det:odd} if $|z|$ is odd, and $p_1^3p_2/q_1^2<p_1p_2$ in \eqref{det:even} if $|z|$ is even, from which it follows that we can choose a sufficiently small $\varepsilon_2>0$ so that \eqref{det:general} is a strict inequality whenever $|z|\geqslant 2$.
Thus, for such an $\varepsilon_2$, \eqref{values} (together with \eqref{epsilon-eta}) provides a feasible solution to (D) with objective value $\sqrt{p_1p_2}$, and we have $S^{(z)}\succ 0$ precisely when $|z|\geqslant 2$.

Note that the $(x,y)$-entry of $A_{i,i}$ ($x,y\in\Omega_i$, $i=1,2$) is positive whenever $x\cap y=\emptyset$.
Thus, since $\varepsilon_1,\varepsilon_2>0$, it follows from $Z\bullet X=0$ (cf.~\eqref{values}) that $U_1,U_2$ are intersecting.
We again have \eqref{x_1 in terms of y^z}, but the sum is over $z\in 2^{[n]}$ with $|z|\leqslant 1$, i.e., $\lambda^{(z)}=0$ for all $z\in\binom{[n]}{2}$.
Since $U_1$ is intersecting, it follows that there is an $\ell\in[n]$ such that $U_1=U_1^{(\ell)}$, and then we have $U_2=U_2^{(\ell)}$ as well.

The proof of Theorem~\ref{weighted-thm} is complete when $p_1\leqslant 1/2$.

\subsection{Proof of Theorem~\ref{weighted-thm} when $p_1>1/2$}
\label{subsec:bound}

In this subsection, we deal with the case when $p_1>1/2$.
The matrix $A_{1,1}$ now has negative entries, so that the SDP method from Section~\ref{sec: general problem} does not work in this case.
Instead, we invoke a combinatorial lemma due to Fishburn et al.~\cite{FFFLO1986SIAM}.
For convenience, we include a proof here.
Let $\Omega:=2^{[n]}$ be as in Theorem~\ref{FFFLO}.
We call $U\subset \Omega$ a \emph{co-complex} if $x\in U$ and $x\subset y\in\Omega$ imply $y\in U$.

\begin{lem}[\cite{FFFLO1986SIAM}]\label{monotone}
Let $\bm{p}=(p^{(1)},\dots,p^{(n)}),\tilde{\bm{p}}=(\tilde{p}^{(1)},\dots,\tilde{p}^{(n)})$ be probability vectors, and assume that $p^{(1)}>\tilde{p}^{(1)}$, and that $p^{(\ell)}=\tilde{p}^{(\ell)}$ for $\ell\geqslant 2$.
If $U\subset\Omega$ is a co-complex, then $\mu_{\bm{p}}(U) \leqslant (p^{(1)}/\tilde{p}^{(1)})\mu_{\tilde{\bm{p}}}(U)$.
Moreover, if equality holds, then $1\in x$ for all $x\in U$.
\end{lem}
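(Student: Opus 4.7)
The plan is to condition on whether the element $1$ lies in a given set $x\in U$, thereby reducing the comparison of $\mu_{\bm{p}}(U)$ and $\mu_{\tilde{\bm{p}}}(U)$ to a one-dimensional computation. First, I would partition $U=U_0\sqcup U_1$, where $U_0=\{x\in U:1\notin x\}$ and $U_1=\{x\in U:1\in x\}$, and set $W_1:=\{x\setminus\{1\}:x\in U_1\}\subset 2^{\{2,\ldots,n\}}$. Let $\mu'$ denote the product measure on $2^{\{2,\ldots,n\}}$ with respect to $(p^{(2)},\ldots,p^{(n)})$, which agrees with $(\tilde p^{(2)},\ldots,\tilde p^{(n)})$ by hypothesis.

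The key step is to factor both measures according to this split. Using the product structure, and noting that $U_0$ is already a subset of $2^{\{2,\ldots,n\}}$, one obtains
\begin{align*}
\mu_{\bm{p}}(U)&=(1-p^{(1)})\,\mu'(U_0)+p^{(1)}\,\mu'(W_1),\\
\mu_{\tilde{\bm{p}}}(U)&=(1-\tilde p^{(1)})\,\mu'(U_0)+\tilde p^{(1)}\,\mu'(W_1).
\end{align*}
Next, I would multiply the first identity by $\tilde p^{(1)}$ and the second by $p^{(1)}$ and subtract. The $\mu'(W_1)$ contribution cancels, while the elementary identity $p^{(1)}(1-\tilde p^{(1)})-\tilde p^{(1)}(1-p^{(1)})=p^{(1)}-\tilde p^{(1)}$ collapses the $\mu'(U_0)$ coefficient, leaving
\[
p^{(1)}\,\mu_{\tilde{\bm{p}}}(U)-\tilde p^{(1)}\,\mu_{\bm{p}}(U)=(p^{(1)}-\tilde p^{(1)})\,\mu'(U_0)\geqslant 0,
\]
since $p^{(1)}>\tilde p^{(1)}$ and $\mu'(U_0)\geqslant 0$. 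Dividing by $\tilde p^{(1)}>0$ then yields the desired bound.

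For the equality clause, the right-hand side of the boxed identity vanishes if and only if $\mu'(U_0)=0$; since every $p^{(\ell)}$ lies strictly between $0$ and $1$, each atom of $\mu'$ has positive weight, so $\mu'(U_0)=0$ is equivalent to $U_0=\emptyset$, i.e.\ $1\in x$ for all $x\in U$, which is the claimed characterization. I do not anticipate any genuine obstacle here: the argument is a routine two-term decomposition, and one may note in passing that the co-complex hypothesis is not actually invoked by this scheme, suggesting it is used only to render the lemma convenient in its intended applications.
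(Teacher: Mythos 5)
Your proof is correct, and it takes a genuinely cleaner route than the paper's. You split $U=U_0\sqcup U_1$ by whether $1$ lies in the set, factor both measures through the product structure, and finish by pure algebra via the identity $p^{(1)}\mu_{\tilde{\bm{p}}}(U)-\tilde p^{(1)}\mu_{\bm{p}}(U)=(p^{(1)}-\tilde p^{(1)})\mu'(U_0)$. The paper instead uses a three-way decomposition $U=U'\sqcup U''\sqcup U'''$ with $U'':=\{\{1\}\sqcup x:x\in U'\}$ and pairs each $x\in U'$ with $\{1\}\sqcup x\in U''$ to show $\mu_{\bm{p}}(U'\sqcup U'')=\mu_{\tilde{\bm{p}}}(U'\sqcup U'')$, then handles $U'''$ separately; the co-complex hypothesis is exactly what guarantees $U''\subset U$, so that this is a genuine partition of $U$. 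Your observation that the co-complex hypothesis is never invoked in your argument is accurate: your identity holds for arbitrary $U\subset\Omega$, so you actually prove a slightly more general statement. The equality clause you give is the biconditional $\mu'(U_0)=0\iff U_0=\emptyset$, which uses the positivity of all atoms (valid since every $p^{(\ell)}\in(0,1)$) and subsumes the paper's one-directional claim. The co-complex hypothesis is presumably retained because it matches the source \cite{FFFLO1986SIAM} and because in the only place the lemma is applied (Subsection~\ref{subsec:bound}) the families $U_1,U_2$ are already taken to be co-complexes without loss of generality, so nothing is lost there.
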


\begin{proof}
We have $U=U'\sqcup U''\sqcup U'''$, where 
\begin{equation*}
	U':=\{x\in U:1\not\in x\},  \quad U'':=\{\{1\}\sqcup x: x\in U'\}, \quad U''':=U\backslash (U'\sqcup U'').
\end{equation*}
There is a bijection from $U'$ to $U''$ which sends $x\in U'$ to $y:=\{1\}\sqcup x\in U''$.
Then, letting $\mu_{\bm{p}}(\{x\})=(1-p^{(1)})\pi_x$, we have $\mu_{\bm{p}}(\{y\})=p^{(1)}\pi_x$, $\mu_{\tilde{\bm{p}}}(\{x\})=(1-\tilde{p}^{(1)})\pi_x$, and $\mu_{\tilde{\bm{p}}}(\{y\})=\tilde{p}^{(1)}\pi_x$, from which it follows that $\mu_{\bm{p}}(\{x,y\})=\mu_{\tilde{\bm{p}}}(\{x,y\})=\pi_x$.
Thus, we have $\mu_{\bm{p}}(U'\sqcup U'')=\mu_{\tilde{\bm{p}}}(U'\sqcup U'')$.
Next, observe that $1\in z$ for all $z\in U'''$.
Thus, letting $\mu_{\tilde{\bm{p}}}(U''')=\tilde{p}^{(1)}\sigma$, we have $\mu_{\bm{p}}(U''')=p^{(1)}\sigma=(p^{(1)}/\tilde{p}^{(1)})\mu_{\tilde{\bm{p}}}(U''')$.
It follows that
\begin{equation*}
	\mu_{\bm{p}}(U)=\mu_{\bm{p}}(U'\sqcup U'')+\mu_{\bm{p}}(U''')=\mu_{\tilde{\bm{p}}}(U'\sqcup U'')+ \frac{p^{(1)}}{\tilde{p}^{(1)}}\cdot\mu_{\tilde{\bm{p}}}(U''')\leqslant \frac{p^{(1)}}{\tilde{p}^{(1)}}\cdot\mu_{\tilde{\bm{p}}}(U).
\end{equation*}
If equality holds, then $\mu_{\tilde{\bm{p}}}(U'\sqcup U'')=0$, so that $U'=\emptyset$, as desired.
\end{proof}

We now return to the proof of Theorem~\ref{weighted-thm} when $p_1>1/2$.
Let $\tilde{p}_i:=\max\{p_i^{(\ell)}:\ell\geqslant 2\}$ for $i=1,2$.
Recall that we are assuming (besides \eqref{assumption on p}) that $p_1^{(\ell)},p_2^{(\ell)}\leqslant 1/2$ for $\ell\geqslant 2$, so that we have $\tilde{p}_1,\tilde{p}_2\leqslant 1/2$.
For $i=1,2$, let
\begin{equation*}
	\tilde{\bm{p}}_i=(\tilde{p}_i^{(1)},\dots,\tilde{p}_i^{(n)}):=(\tilde{p}_i,p_i^{(2)},p_i^{(3)},\dots,p_i^{(n)}).
\end{equation*}
Note that $\tilde{\bm{p}}_1,\tilde{\bm{p}}_2$ also satisfy \eqref{assumption on p}.
We also let $\tilde{w}:=w_{\tilde{\bm{p}}_1,\tilde{\bm{p}}_2}=\bigl\{\ell\in [n]:(\tilde{p}_1^{(\ell)},\tilde{p}_2^{(\ell)})=(\tilde{p}_1,\tilde{p}_2)\bigr\}$.
Since $\tilde{p}_1,\tilde{p}_2\leqslant 1/2$, it follows from Lemma~\ref{lem:ineq} that
\begin{equation}\label{eq1}
	\mu_{\tilde{\bm{p}}_1}(U_1)\mu_{\tilde{\bm{p}}_2}(U_2) \leqslant \tilde{p}_1\tilde{p}_2.
\end{equation}
Without loss of generality, we may assume that each of $U_1,U_2$ is a  co-complex.
Then, by Lemma~\ref{monotone}, we have
\begin{equation}\label{eq2}
	\mu_i(U_i) \leqslant \frac{p_i}{\tilde{p}_i} \cdot\mu_{\tilde{\bm{p}}_i}(U_i) \qquad (i=1,2).
\end{equation}
Combining \eqref{eq1} and \eqref{eq2}, we have
\begin{equation}\label{eq4}
	\mu_1(U_1)\mu_2(U_2) \leqslant \frac{p_1p_2}{\tilde{p}_1\tilde{p}_2} \cdot\mu_{\tilde{\bm{p}}_1}(U_1)\mu_{\tilde{\bm{p}}_2}(U_2) \leqslant p_1p_2,
\end{equation}
i.e., the bound \eqref{bound} holds.

For the rest of this subsection, assume that equality holds in \eqref{eq4}.
Then, equality also holds in \eqref{eq2} for $i=1,2$.
Since $p_1>1/2\geqslant\tilde{p}_1$, it follows from Lemma~\ref{monotone} that $U_1\subset U_1^{(1)}$.

\subsubsection{The case when $p_2>1/2$}

By Lemma~\ref{monotone}, and since $p_2>1/2\geqslant\tilde{p}_2$, we have $U_2\subset U_2^{(1)}$ as well.
Thus, $U_1=U_1^{(1)}, U_2=U_2^{(1)}$ by the optimality.

\subsubsection{The case when $\tilde{p}_1<1/2$ or $\tilde{p}_2<1/2$}

By the results of Subsection~\ref{subsec:1/2>=p_1} (applied to $\tilde{\bm{p}}_1,\tilde{\bm{p}}_2$), and since equality holds in \eqref{eq1}, it follows that $U_1=U_1^{(\ell)},U_2=U_2^{(\ell)}$ for some $\ell\in \tilde{w}$.
Since $U_1\subset U_1^{(1)}$, we conclude that $\ell=1$, as desired.

\subsubsection{The case when $1/2=\tilde{p}_1=p_2=\tilde{p}_2$}

We first prove the uniqueness of the optimal configuration for the kernels:

\begin{claim}
If $\bm{p}_1=(p,1/2,\dots,1/2)$, $\bm{p}_2=(1/2,\dots,1/2)$ where $p=p_1>1/2$, then $U_1=U_1^{(1)},U_2=U_2^{(1)}$.
\end{claim}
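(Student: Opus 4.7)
The plan is to combine the inclusion $U_1\subset U_1^{(1)}$ already established just before the claim with a size count coming from Lemma~\ref{kernel} applied to the auxiliary tilde problem. The tilde vectors $\tilde{\bm{p}}_1,\tilde{\bm{p}}_2$ here are both the uniform vector $(1/2,\dots,1/2)$, so $\tilde{w}=[n]$ and $\tilde{p}_1=\tilde{p}_2=1/2$. Since equality holds throughout \eqref{eq4}, in particular equality holds in \eqref{eq1}, so that $U_1,U_2$ are optimal cross-intersecting families with respect to the uniform product measures at $1/2$.

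First, I would recall that without loss of generality $U_1,U_2$ are co-complexes, and that applying Lemma~\ref{monotone} with equality in \eqref{eq2} for $i=1$ (valid because $p_1>1/2=\tilde{p}_1$) yields $U_1\subset U_1^{(1)}$. Next, Lemma~\ref{kernel} applied to the tilde problem, in the case $\tilde{p}_1=\tilde{p}_2=1/2$ with $\tilde{w}=[n]$, gives the size information $|U_1|=|U_2|=2^{n-1}$. Combining this with $U_1\subset U_1^{(1)}$ and $|U_1^{(1)}|=2^{n-1}$ forces $U_1=U_1^{(1)}$.

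Finally, to pin down $U_2$, I would use the cross-intersection property with the element $\{1\}\in U_1^{(1)}=U_1$: every $y\in U_2$ must satisfy $y\cap\{1\}\ne\emptyset$, i.e., $1\in y$, which gives $U_2\subset U_2^{(1)}$. Since $|U_2|=2^{n-1}=|U_2^{(1)}|$, we conclude $U_2=U_2^{(1)}$, as required.

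There is no real obstacle in this argument; the only thing to notice is that, even though the tilde problem lives in the exceptional regime $p_1=p_2=1/2$ with $|w|\geqslant 3$ of Theorem~\ref{weighted-thm} (where the optimal pairs are not characterised), Lemma~\ref{kernel} still delivers the size equality $|U_i|=2^{n-1}$, which is all that is needed once the inclusion $U_1\subset U_1^{(1)}$ is in hand.
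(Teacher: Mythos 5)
Your proof is correct, and it takes a genuinely different route from the paper's. The paper proves the Claim by a self-contained counting argument: starting from $U_1\subset U_1^{(1)}$ it deduces $U_2^{(1)}\subset U_2$ by optimality, writes $U_2=U_2^{(1)}\sqcup E$ with $E\subset\Omega_2|_{[n]\setminus\{1\}}$, observes that $U_1$ must avoid the complement-image $F=\{[n]\setminus x:x\in E\}$ of $E$, and then obtains
\begin{equation*}
\mu_1(U_1)\mu_2(U_2)\leqslant p\left(1-\frac{|E|}{2^{n-1}}\right)\cdot\frac12\left(1+\frac{|E|}{2^{n-1}}\right)=\frac{p}{2}\left(1-\frac{|E|^2}{2^{2n-2}}\right)\leqslant\frac{p}{2},
\end{equation*}
which forces $E=\emptyset$ and then $U_1=U_1^{(1)}$. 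You instead pull the cardinality count $\bigl|U_1\bigr|=\bigl|U_2\bigr|=2^{n-1}$ from the ``moreover'' part of Lemma~\ref{kernel} applied to the tilde problem (where $\tilde{p}_1=\tilde{p}_2=1/2$ and $\tilde{w}=[n]$, so $U_i|_{\tilde{w}}=U_i$), then use $U_1\subset U_1^{(1)}$ together with $|U_1^{(1)}|=2^{n-1}$ to force $U_1=U_1^{(1)}$, and finish with the observation that $\{1\}\in U_1$ and cross-intersection give $U_2\subset U_2^{(1)}$, which again is an equality by the size count. Your argument leans on the SDP-derived Lemma~\ref{kernel} (which the paper in any case invokes immediately after the Claim to handle general $\tilde{w}$), so it is somewhat more streamlined in that it avoids a separate quadratic estimate; the paper's version has the virtue of being elementary and independent of the machinery. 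Both are correct.
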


\begin{proof}
Recall again that $U_1\subset U_1^{(1)}$.
Then, $U_2^{(1)}\subset U_2$ by the optimality, and therefore we can write $U_2=U_2^{(1)}\sqcup E$, where $E\subset \Omega_2|_{[n]\backslash\{1\}}$.
Let $F:=\{[n]\backslash x:x\in E\}$, viewed as a subset of $U_1^{(1)}$.
Then, $U_1\cap F=\emptyset$ since $U_1,E$ are cross-intersecting, so that
\begin{equation*}
	\mu_1(U_1)\leqslant \mu_1(U_1^{(1)})-\mu_1(F)=p-\frac{p|E|}{2^{n-1}}.
\end{equation*}
On the other hand,
\begin{equation*}
	\mu_2(U_2)=\mu_2(U_2^{(1)})+\mu_2(E)=\frac{1}{2}+\frac{|E|}{2^n}.
\end{equation*}
Thus, we have
\begin{equation*}
	\mu_1(U_1) \mu_2(U_2)\leqslant p\left(1-\frac{|E|}{2^{n-1}}\right)\cdot\frac{1}{2}\left( 1+ \frac{|E|}{2^{n-1}}\right)=\frac{p}{2}\left(1-\frac{|E|^2}{2^{2n-2}}\right)\leqslant\frac{p}{2},
\end{equation*}
and then equality implies that $U_2=U_2^{(1)}$ (i.e., $E=\emptyset$) and also $U_1=U_1^{(1)}$, as desired.
\end{proof}

We now consider the general case.
Since equality holds in \eqref{eq1}, it follows from Lemma~\ref{kernel} (applied to $\tilde{\bm{p}}_1,\tilde{\bm{p}}_2$) that $U_i=U_i|_{\tilde{w}} \times \Omega_i|_{[n]\backslash \tilde{w}}$ for $i=1,2$.
Thus, $U_1|_{\tilde{w}},U_2|_{\tilde{w}}$ are cross-intersecting, and moreover we have $\mu_i(U_i)=\mu_{\bm{p}_i''}(U_i|_{\tilde{w}})$ for $i=1,2$, where $\bm{p}_i'':=(p_i^{(\ell)}:\ell\in \tilde{w})$.
By Claim above (applied to $\bm{p}_1'',\bm{p}_2''$), it follows that $U_1|_{\tilde{w}}=U_1^{(1)}|_{\tilde{w}},U_2|_{\tilde{w}}=U_2^{(1)}|_{\tilde{w}}$, and therefore we have $U_1=U_1^{(1)},U_2=U_2^{(1)}$.

This completes the proof of Theorem~\ref{weighted-thm}.

\section{Proof of Theorem~\ref{thm 1/3}}\label{sec: proof of thm3}

We proceed as in the proof of Lemma~\ref{lem:ineq}.
Recall the symmetric matrix $S$ defined by \eqref{S} and \eqref{epsilon-eta}, and set $\varepsilon_2=\sqrt{p_1p_2}/2$.
Note that $\varepsilon_1=\sqrt{p_1p_2}/2$ and $\eta=1/2$ in this case.
Using $(c_{1,2}^{(z)})^2=\bigl|c_{1,1}^{(z)}\bigr|\bigl|c_{2,2}^{(z)}\bigr|$, we can rewrite \eqref{det:general} as follows:
\begin{equation}\label{det:e1=e2}
	p_1p_2 \left(\frac{1-c_{1,1}^{(z)}}{\bigl|c_{1,1}^{(z)}\bigr|}\right) \!\! \left(\frac{1-c_{2,2}^{(z)}}{\bigl|c_{2,2}^{(z)}\bigr|}\right)\geqslant 1.
\end{equation}
We note that $(1-t)/|t|$ is decreasing (resp.~increasing) in $t$ for $t>0$ (resp.~$t<0$).
Thus, in order to show that $S\succcurlyeq 0$, it suffices to verify \eqref{det:e1=e2} for $z\in 2^{[n]}$ with $|z|=1,2$. 

First, suppose that $|z|=1$, and let $z=\{\ell\}$.
In this case, we have
\begin{equation*}
	\frac{1-c_{i,i}^{(z)}}{\bigl|c_{i,i}^{(z)}\bigr|}=\frac{q_i^{(\ell)}}{p_i^{(\ell)}}+1=\frac1{p_i^{(\ell)}}.
\end{equation*}
Thus, \eqref{det:e1=e2} is equivalent to $p_1p_2\geqslant p_1^{(\ell)}p_2^{(\ell)}$, and this is just \eqref{weak assumption}.
Next, suppose that $|z|=2$, and let $z=\{\ell,\ell'\}$.
Then, we have
\begin{equation*}
	\frac{1-c_{i,i}^{(z)}}{\bigl|c_{i,i}^{(z)}\bigr|}=\frac{1}{\bigl|c_{i,i}^{(z)}\bigr|}-1=\frac{q_i^{(\ell)}q_i^{(\ell')}-p_i^{(\ell)}p_i^{(\ell')}}{p_i^{(\ell)}p_i^{(\ell')}}=\frac{1-p_i^{(\ell)}-p_i^{(\ell')}}{p_i^{(\ell)}p_i^{(\ell')}},
\end{equation*}
so that \eqref{det:e1=e2} is equivalent to
\begin{equation*}
	p_1p_2\bigl(1-p_1^{(\ell)}-p_1^{(\ell')}\bigr) \! \bigl(1-p_2^{(\ell)}-p_2^{(\ell')}\bigr) \geqslant p_1^{(\ell)}p_1^{(\ell')}p_2^{(\ell)}p_2^{(\ell')}.
\end{equation*}
This is certainly true since $p_1p_2\geqslant p_1^{(\ell)}p_2^{(\ell)}$ and $1-p_i^{(\ell)}-p_i^{(\ell')}\geqslant 1/3\geqslant p_i^{(\ell')}$.
Thus, it follows that $S\succcurlyeq 0$.
Moreover, it is clear that $Z\geqslant 0$ in this case, and therefore \eqref{values} provides a feasible solution to (D) with objective value $\sqrt{p_1p_2}$.
This proves the bound \eqref{bound}.

Now, assume that $\mu_1(U_1)\mu_2(U_2)=p_1p_2$, and recall the matrix $X=X_{U_1,U_2}$.
We note that the $(x,y)$-entry of $A_{i,i}$ ($x,y\in\Omega_i$) is positive whenever $x\cap y=\emptyset$.
Thus, it follows from $Z\bullet X=0$ that $U_1,U_2$ are intersecting.
Moreover, observe that \eqref{det:e1=e2} and thus \eqref{det:general} are strict inequalities unless $z\in\binom{w}{1}$ or ($p_1=p_2=1/3$ and) $z\in\binom{w}{2}$, where $w$ is as in Conjecture~\ref{relaxed version}.
We again have \eqref{x_1 in terms of y^z}, but the sum is over $z\in 2^w$ with $|z|\leqslant 2$.
We can now argue exactly as in Subsection~\ref{case:1/2=p_1>p_2} to conclude that $U_1=U_1^{(\ell)}$, $U_2=U_2^{(\ell)}$ for some $\ell\in w$.

\section*{Acknowledgments}

The authors thank Peter Frankl for telling them the reference \cite{FFFLO1986SIAM}.
They also thank the anonymous referees for comments and suggestions.
Hajime Tanaka was supported by JSPS KAKENHI Grant No.~25400034.
Norihide Tokushige was supported by JSPS KAKENHI Grant No.~25287031.


\begin{thebibliography}{99}

\bibitem{Borg2012pre}
P. Borg,
Cross-intersecting integer sequences, preprint;
arXiv:\href{http://arxiv.org/abs/1212.6955}{1212.6965}.

\bibitem{Delsarte1973PRRS}
P. Delsarte,
An algebraic approach to the association schemes of coding theory,
Philips Res. Rep. Suppl. No. 10 (1973).

\bibitem{DF1983SIAM}
M. Deza, P. Frankl,
Erd\H{o}s--Ko--Rado theorem --- 22 years later,
SIAM J. Algebraic Discrete Methods 4 (1983) 419--431.

\bibitem{EKR1961QJMO}
P. Erd\H{o}s, C. Ko, R. Rado,
Intersection theorems for systems of finite sets,
Quart. J. Math. Oxford Ser. (2) 12 (1961) 313--320.

\bibitem{FFFLO1986SIAM} 
P. C. Fishburn, P. Frankl, D. Freed, J. C. Lagarias, A. M. Odlyzko, 
Probabilities for intersecting systems and random subsets of finite sets, 
SIAM J. Algebraic Discrete Methods  7  (1986) 73--79.

\bibitem{FG1989SDX}
P. Frankl, R. L. Graham,
Old and new proofs of the Erd\H{o}s--Ko--Rado theorem,
Sichuan Daxue Xuebao 26 (1989) 112--122.

\bibitem{FT2003SSMH}
P. Frankl, N. Tokushige,
Weighted multiply intersecting families,
Studia Sci. Math. Hungar. 40 (2003) 287--291.

\bibitem{Friedgut2008C}
E. Friedgut,
On the measure of intersecting families, uniqueness and stability,
Combinatorica 28 (2008) 503--528.

\bibitem{GM2012B}
B. G\"{a}rtner, J. Matou\v{s}ek,
Approximation Algorithms and Semidefinite Programming,
Springer, Heidelberg, 2012.

\bibitem{GM2015B}
C. Godsil, K. Meagher,
Erd\H{o}s--Ko--Rado Theorems: Algebraic Approaches,
Cambridge University Press, Cambridge, 2015.

\bibitem{Lovasz1979IEEE}
L. Lov\'{a}sz,
On the Shannon capacity of a graph,
IEEE Trans. Inform. Theory 25 (1979) 1--7.

\bibitem{MT1989JCTA}
M. Matsumoto, N. Tokushige,
The exact bound in the Erd\H{o}s--Ko--Rado theorem for cross-intersecting families,
J. Combin. Theory Ser. A 52 (1989) 90--97.

\bibitem{Pyber1986JCTA}
L. Pyber,
A new generalization of the Erd\H{o}s--Ko--Rado theorem,
J. Combin. Theory Ser. A 43 (1986) 85--90.

\bibitem{Schrijver1979IEEE}
A. Schrijver,
A comparison of the Delsarte and Lov\'{a}sz bounds,
IEEE Trans. Inform. Theory 25 (1979) 425--429.

\bibitem{ST2014BLMS}
S. Suda, H. Tanaka,
A cross-intersection theorem for vector spaces based on semidefinite programming,
Bull. Lond. Math. Soc. 46 (2014) 342--348;
arXiv:\href{http://arxiv.org/abs/1304.5466}{1304.5466}.

\bibitem{Todd2001AN}
M. J. Todd,
Semidefinite optimization,
Acta Numer. 10 (2001) 515--560.

\bibitem{Tokushige2005RMJ}
N. Tokushige,
Intersecting families --- uniform versus weighted,
Ryukyu Math. J. 18 (2005) 89--103.

\bibitem{Tokushige2010JCT}
N. Tokushige,
On cross $t$-intersecting families of sets,
J. Combin. Theory Ser. A 117 (2010) 1167--1177.

\bibitem{Tokushige2013JAC}
N. Tokushige,
The eigenvalue method for cross $t$-intersecting families,
J. Algebraic Combin. 38 (2013) 653--662.

\bibitem{Tokushige2013CPC} 
N. Tokushige,
Cross $t$-intersecting integer sequences from weighted Erd\H{o}s--Ko--Rado,
Combin. Probab. Comput. 22 (2013) 622--637.

\bibitem{Wilson1984C}
R. M. Wilson,
The exact bound in the Erd\H{o}s--Ko--Rado theorem,
Combinatorica 4 (1984) 247--257.

\end{thebibliography}
\end{document}